\documentclass[pdflatex,sn-mathphys-num]{sn-jnl1}

\usepackage{graphicx}%
\usepackage{multirow}%
\usepackage{amsmath,amssymb,amsfonts}%
\usepackage{amsthm}%
\usepackage{mathrsfs}%
\usepackage[title]{appendix}%
\usepackage{xcolor}%
\usepackage{textcomp}%
\usepackage{manyfoot}%
\usepackage{booktabs}%
\usepackage{algorithm}%
\usepackage{algorithmicx}%
\usepackage{algpseudocode}%
\usepackage{listings}%

\theoremstyle{thmstyleone}%
\newtheorem{theorem}{Theorem}[section]

\newtheorem{proposition}[theorem]{Proposition}

\newtheorem{definition}[theorem]{Definition}

\newtheorem{corollary}[theorem]{Corollary}
\numberwithin{equation}{section}
\advance\hoffset-1truecm\relax
\def\n{\noindent}

\def\ve{\varepsilon}
\def\va{\varphi}

\def\O{\Omega}

\def\cn{\mathbb C^n}
\def\om{\omega}

\def\wed{\wedge}

\begin{document}

\title[Continuity of the complex Monge-Amp\`ere operator on compact Hermitian manifolds]{Continuity of the complex Monge-Amp\`ere operator on compact Hermitian manifolds}


\author[1]{\fnm{Le Mau Hai} }\email{mauhai@hnue.edu.vn}
\equalcont{These authors contributed equally to this work.}
\author[2]{\fnm{Nguyen Van Phu} }\email{phunv@epu.edu.vn}
\equalcont{These authors contributed equally to this work.}

\author*[3]{\fnm{Trinh Tung} }\email{tungtrinhvn@gmail.com}
\equalcont{These authors contributed equally to this work.}

\affil[1]{\orgdiv{Department of Mathematics}, \orgname{Hanoi National University of Education}, \orgaddress{\street{136 Xuan Thuy}, \city{Hanoi},  \country{Vietnam}}}

\affil[2]{\orgdiv{Department of Mathematics, Faculty of Natural Sciences}, \orgname{Electric Power University}, \orgaddress{\street{235 Hoang Quoc Viet}, \city{Hanoi},  \country{Vietnam}}}

\affil*[3]{\orgdiv{Department of Mathematics, Faculty of Natural Sciences}, \orgname{Electric Power University}, \orgaddress{\street{235 Hoang Quoc Viet}, \city{Hanoi},  \country{Vietnam}}}

\abstract{ {\fontsize{11 pt}{1}\selectfont	In this note, we establish several results concerning the continuity (or weak convergence) of the complex Monge-Amp\`ere operator on compact Hermitian manifolds. At the end of this note, we find a weak solution of the complex Monge-Amp\`ere equation on a compact Hermitian manifold under the assumption of the existence of a smooth subsolution.}   }

\keywords{$\om$-plurisubharmonic functions, weakly convergent, the complex Monge-Amp\`ere operator, compact Hermitian manifolds, convergence in capacity, weak solutions of the complex Monge-Amp\`ere equation.}
\pacs[MSC Classification]{32U05, 32Q15, 32W20.}

\maketitle

	\section{Introduction}
	\subsection{Background}
{\fontsize{11 pt}{1}\selectfont	
 Let $(X,\om)$ be a compact Hermitian manifold of complex dimension $n$, equipped with the Hermitian fundamental form $\om$ given in local coordinates by  
$ \om=\frac{i}{2}\sum\limits_{\alpha,\beta} a_{\alpha,\bar{\beta}} dz_{\alpha}\wed d\bar{z}_{\beta},$
 where $(a_{\alpha,\bar{\beta}})$ is a positive definite Hermitian matrix. In the case $d\om=0$, then $\om$ is called a K\"ahler form and $(X,\om)$ is said to be a compact K\"ahler manifold. The smooth volume form $dV$ associated to this Hermitian metric is given by the $n$-th wedge product $dV=\om^n$. The study of the continuity of the complex Monge-Amp\`ere operator $(dd^c\bullet)^n$ plays an important role in many problems of pluripotential theory. The original results in the direction of this research from the seminal work of Betford and Taylor in \cite{BeTa82}. In this paper, Bedford and Taylor proved that if a sequence of locally bounded plurisubharmonic functions $(u_j)\subset PSH(\O)\cap L^{\infty}_{loc}(\O)$, where $PSH(\O)$ denotes the set of plurisubharmonic functions on an open subset $\O\subset\cn$, is decreasing to a locally bounded plurisubharmonic function $u$ then $(dd^c u_j)^n$ is weakly convergent to $(dd^c u)^n$. They also established a result about weak convergence of the sequence of the complex Monge-Amp\`ere $(dd^c u_j)^n$ to $(dd^c u)^n$ under the assumption that $(u_j)$ is an increasing sequence and a.e. convergent concerning the Lebesgue measure $dV_{2n}$ to a locally bounded plurisubharmonic $u$. In summary, by relying on the results obtained by Bedford-Taylor, we observe that the Monge-Amp\`ere operator $(dd^c \bullet)^n$ is continuous in weak*-topology along monotonic sequences in the class of locally bounded plurisubharmonic functions.

 It is worth noting, however, that monotone convergence of a sequence of plurisubharmonic functions \((u_j)\) to a psh function \((u)\) does not always occur. This raises the natural question of whether the monotonicity assumption can be replaced by a weaker condition, such as convergence in $L^p$, which still guarantees the convergence in the weak*-topology of the sequence of complex Monge-Amp\`ere operators $(dd^c u_j)^n$ to $(dd^c u)^n$. Unfortunately, that does not happen. Due to a counterexample given in \cite{Ce83}, Cegrell showed that there exists a sequence of continuous plurisubharmonic functions $(u_j)$ in $\mathbb{C}^2$ converging in the $L^p, 1\leq p<+\infty$-topology to a plurisubharmonic function $u$, but the corresponding sequence of Monge-Amp\`ere operators $(dd^c u_j)^2$ does not converge in the weak*-topology to $(dd^c u)^2$. Thus, convergence in the $L^p$-topology for $p \geq 1$ does not imply the weak continuity of the sequence of Monge-Amp\`ere operators.

 At this point, one seeks a condition weaker than the monotone convergence of sequences of plurisubharmonic functions, yet strong enough to establish the continuity of the Monge-Amp\`ere operator sequence in the weak*-topology.
In this direction, Xing, in his 1996 paper \cite{Xi96}, introduced the notion of convergence in \(C_n\)-capacity for sequences of plurisubharmonic functions. He proved that if \((u_j)\) is a sequence of plurisubharmonic functions, uniformly bounded on an open subset \(\Omega \subset \mathbb{C}^n\), and converging to a locally bounded plurisubharmonic function \(u\) in \(C_n\)-capacity, then the corresponding Monge-Amp\`ere measures \((dd^c u_j)^n\) converge to \((dd^c u)^n\) in the weak\(^*\)-topology. Also in that paper, based on the concept of convergence in $C_n$-capacity, Xing obtained several other forms of weak convergence for sequences of Monge-Amp\`ere measures. For more details on these results, we refer readers to \cite{Xi96}. Next, in \cite{Xi08}, relying on the concept of convergence in $C_n$-capacity, Xing has provided deeper results concerning the relationship between the convergence in $C_n$-capacity of a sequence of uniformly bounded plurisubharmonic functions  $(u_j)$ to a locally bounded plurisubharmonic function $u$, and the weak convergence of the sequence of corresponding Monge-Amp\`ere measures  $(dd^c u_j)^n$ and  $(dd^c u)^n$. In fact, Xing found a type of convergence of the complex Monge-Amp\`ere measures which is essentially equivalent to convergence in $C_n$-capacity of plurisubharmonic functions, respectively (see Theorems 2.8, 2.11, 2.14  and Corollaries 2.10, 2.12 in \cite{Xi08}).

Note that, from the monotone convergence of a locally bounded sequence of plurisubharmonic functions, it is not difficult to deduce the convergence of this sequence in $C_n$-capacity. This shows that convergence in $C_n$-capacity is a weaker condition than monotone convergence, yet it is still effective for establishing the weak convergence of Monge-Amp\`ere measure sequences. It should be noted that it was precisely the discovery of the weak convergence of the Monge-Amp\`ere measures of bounded plurisubharmonic functions on hyperconvex domains $\O\subset\cn$, made between 1998 and 2004, that led Cegrell in \cite{Ce98} and \cite{Ce04} to show that the Monge-Amp\`ere operator $(dd^c \bullet)^n$ is well-defined on certain classes of unbounded plurisubharmonic functions. He introduced and investigated some classes of plurisubharmonic functions such as $\mathcal{E}_p(\Omega), \mathcal{F}_p(\Omega), \mathcal{E}(\Omega), \mathcal{F}(\Omega)$, and, at the same time, he investigated the solvability of the Monge-Amp\`ere equations within these classes. In \cite{Ce04}, Cegrell showed that if $\Omega \subset \mathbb{C}^n$ is a hyperconvex domain, then $\mathcal{E}(\Omega)$ is the biggest class on which the Monge-Amp\`ere operator is well-defined and continuous under decreasing sequences of plurisubharmonic functions in that class. For more details on the aforementioned issues, we refer readers to the concepts and results presented in \cite{Ce98} and \cite{Ce04}, respectively.

A natural question that arises here is whether the convergence in $C_n$-capacity of plurisubharmonic functions in the above-mentioned Cegrell's classes implies the weak convergence of the corresponding Monge-Amp\`ere measures $(dd^c \bullet)^n$. The study of the weak convergence of the complex Monge-Amp\`ere operator in these classes was initiated by Cegrell in a 2001 manuscript (see \cite{Ce01}) and was completed and published in \cite{Ce12} in 2012. In \cite{Ce12}, he proved that if $(u_j)$ is a sequence of plurisubharmonic functions in $\mathcal{E}(\O)$, where $\O$ is a hyperconvex domain in $\cn$, $u\in\mathcal{E}(\O)$ and there exists $u_0\in\mathcal{E}(\O)$ such that $u_j\geq u_0$ for all $j\geq 1$, $(u_j)$ is convergent to $u$ in $C_n$-capacity then $(dd^c u_j)^n$ is weakly convergent to $(dd^c u)^n$. Since then, the problem of studying the weak convergence of the Monge-Amp\`ere operator sequence in Cegrell classes under the influence of convergence in $C_n$-capacity has attracted the attention of several authors.

In \cite{Xi08}, Xing investigated the weak convergence of a sequence of Monge-Amp\`ere operator $(dd^c\bullet)^n$ in the class $\mathcal{F}(\O)$. Pham Hoang Hiep in \cite{Hi08}, by removing the condition $ u_j \geq u_0 $, it was shown in \cite{Ce12} that if $ (u_j) \subset \mathcal{E}(\Omega)$ and $ u_j \rightarrow u$ in $C_n$-capacity, then
$$\lim\inf\limits_{j\to\infty} (dd^c u_j)^n\geq 1_{\{u>-\infty\}}(dd^c u)^n.$$
\n The results discussed above concerning convergence in \(C_n\)-capacity and the associated weak convergence of Monge--Amp\`ere measures have thus far been formulated in the local setting, i.e., on open subsets of \(\mathbb{C}^n\).

 In this work, we aim to extend this line of research to the global setting, with a particular focus on the connection between convergence in \(C_n\)-capacity and the weak convergence of Monge--Amp\`ere operators on complex manifolds,
specifically, on compact K\"ahler manifolds and, more generally, on compact Hermitian manifolds. These developments are relatively recent and serve as the principal motivation for the present paper. We begin by investigating the weak convergence of Monge--Amp\`ere measures under convergence in capacity  on a compact K\"ahler manifold \((X, \omega)\) of complex dimension \(n\). Since every plurisubharmonic function on $X$ is necessarily constant, the appropriate framework in this context is to work with \emph{quasi-plurisubharmonic} functions, also known as \(\omega\)-plurisubharmonic functions (abbreviated as \(\omega\)-psh functions). This class of functions was introduced and studied in detail, and some fundamental results for this class were obtained in \cite{GZ05}. By $PSH(\om)$ we denote the class of $\om$-psh on $(X,\om)$. We recall some main results in \cite{GZ05}. If $u\in PSH(\om)\cap L^{\infty}(X)$ then in\cite{GZ05} they showed that there exists a Monge-Amp\`ere measure $\om_u^n= (\om +dd^c u)^n$ and it is continuous on decreasing sequences ( also see \cite{KN15} for arguments on the existence of Monge-Amp\`ere measure $\om_u^n$ in the situation $(X,\om)$ is a compact Hermitian manifold). Definition 3.4 in \cite{GZ05} deals with the notion of capacity of a Borel subset $E\subset X$ on a compact K\"ahler manifold $(X,\om)$. Corollary 3.8 gives the quasi-continuity of $\om$-psh. Remark that here we only recall some notions concerning the class of $\om$-psh functions. The precise definition of $\omega$-plurisubharmonic functions as well as the notion of capacity in the case where $(X, \omega)$ is a compact Hermitian manifold will be presented in detail in the following section of the paper.

Next, in \cite{GZ07}, Guedj and Zeriahi introduced and studied the class $\mathcal{E}(X,\om)$. They showed that the complex Monge-Amp\`ere operator $(\om +dd^c\bullet)^n$ is well defined on the class $\mathcal{E}(X,\om)$ of $\om$-psh with finite weighted Monge-Amp\`ere energy. The class \(\mathcal{E}(X, \omega)\) is the largest class of \(\omega\)-plurisubharmonic (\(\omega\)-psh) functions on which the complex Monge--Amp\`ere operator is well defined and for which the comparison principle holds. For detailed results concerning this class, we refer the reader to \cite{GZ07}.
The study of convergence with respect to the global capacity \(Cap_{\omega}\) on compact K\"ahler manifolds was initiated in \cite{Hi08a,Hi10}. In that paper, Hiep established several equivalent characterizations for convergence in \(Cap_{\omega}\) of a uniformly bounded sequence \((u_j)\) of \(\omega\)-psh functions to a given \(\omega\)-psh function \(u\) (see Theorem 2.1 and Lemma 2.3 in \cite{Hi08a}).

Subsequently, the study of the weak continuity of the Monge-Amp\`ere operator $(\omega + dd^c\bullet)^n$ concerning convergence in $Cap_{\omega}$ on the class $\mathcal{E}(X,\omega)$ was carried out by Xing in \cite{Xi08}. Up until 2008, the results obtained by Xing in \cite{Xi08,Xi09} represented the most profound and significant contributions to this line of research. Subsequently, in 2012, Dinew and Hiep further advanced the study of convergence in capacity on compact K\"ahler manifolds within the class $\mathcal{E}(X, \omega)$ (see \cite{DH12}). Their work synthesizes and extends several results previously established in \cite{Xi08}.  The results mentioned above are results concerning the continuity of the Monge-Amp\`ere operator for $\omega$-plurisubharmonic functions related to convergence in the capacity $Cap_{\omega}$ on compact K\"ahler manifolds.

Note that, when studying the continuity of the Monge--Amp\`ere operator $(dd^c u_j)^{n}$ on Cegrell classes $\mathcal{E}(\Omega)$, it is usually necessary to assume that the sequence of functions $(u_j)$ is bounded from below by a function $u_0$. However, when considering the continuity problem for the Monge--Amp\`ere operator $(\omega + dd^c \bullet)^{n}$ on compact K\"ahler manifolds, as in the papers of Xing (refer \cite{Xi08,Xi09}) or  Dinew and Hiep in \cite{DH12}, the authors were able to dispense with this condition, since finding such a lower bounding function is not always feasible.  Now, regarding compact Hermitian manifolds, what results have been obtained in this direction of research? Unfortunately, results in this direction for the case $(X,\omega)$, which is a compact Hermitian manifold, remain quite limited up to now. Only recently one knew the study of weak convergence of the Monge-Amp\`ere operator for the class of $\omega$-plurisubharmonic functions appeared, notably in the paper by Ko{\l}odziej and Ngoc Cuong Nguyen in \cite{KN25}. Regrettably, in this paper, under the assumptions of convergence in capacity, the authors were only able to obtain weak convergence of a subsequence $(\omega^n_{u_{j_k}})$ of the original sequence $(\omega^n_{u_j})$ (see Theorem 1.1 in \cite{KN25}). Inspired by the results in \cite{KN25}, in this paper, we aim to provide additional results on the weak convergence of the Monge-Amp\`ere operator on compact Hermitian manifolds for the class of $\omega$-plurisubharmonic functions, in relation to convergence in $Cap_{\omega}$.

\subsection{Organization}\noindent The paper is presented in four sections and organized as follows.\\ In Section \ref{sec2}, we present important results concerning the class of $\omega$-plurisubharmonic functions ($\omega$-psh) that will be used in the subsequent parts of the paper. This includes the definition of the $\omega$-psh class, the Monge-Amp\`ere operator for bounded $\omega$-psh functions, the notion of weak convergence of a sequence of Monge-Amp\`ere operators, the definition of the capacity $Cap_{\omega}$ and the convergence of a sequence $(u_j)$ of $\omega$-psh functions to an $\omega$-psh function in $Cap_{\om}$, as well as several other related concepts. Section \ref{sec3} is devoted to investigating the weak convergence of a sequence of the complex Monge-Amp\`ere operator $(\om + dd^c u_j)^n$ to $(\om+ dd^c u)^n$ if we assume that $u_j$ is convergent to $u$ in $Cap_{\om}$. In Section \ref{sec4}, we establish the existence of weak solutions of the Monge-Amp\`ere equations on compact Hermitian manifolds under the assumption that there exists a smooth subsolution. 
\vskip 0.3cm
\n 
\n 
{\bf Acknowledgments.} 
The authors would like to thank the anonymous referees for their valuable comments and suggestions, which led to improvements in the paper's exposition.
The first and second named authors are supported by Grant number B2025-CTT-10 from the Ministry of Education and Training, Vietnam.
This work was written during our visit to the Vietnam Institute for Advanced Study in Mathematics (VIASM) in the Spring of 2025. We also thank VIASM for financial support and hospitality.}

\section{Preliminairies}\label{sec2}
{\fontsize{11 pt}{1}\selectfont	
\n Through this paper by $(X,\om)$ we denote a fixed compact Hermitian manifold $X$ of dimension $n$ equipped with a Hermitian metric $\om$. As mentioned in the beginning of the paper, locally, $\omega$ can be written as
$ \om=\frac{i}{2}\sum\limits_{\alpha,\beta} a_{\alpha,\bar{\beta}} dz_{\alpha}\wed d\bar{z}_{\beta},$
\n where $(a_{\alpha,\bar{\beta}})$ is a positive definite Hermitian matrix. The smooth volume form $dV$ associated to this Hermitian metric is given by the $n$-th wedge product $dV=\om^n$. Moreover, we assume that $ \int\limits_{X}dV<\infty$.
\begin{definition}\label{dn2.1}	
{\fontsize{11 pt}{1}\selectfont		A function $u:X\rightarrow [-\infty,+\infty)$ is call $\om$-plurisubharmonic function (briefly, $\om$-psh) if it satisfies the following conditions:\\	
	\n (i) $u$ is upper semi-continuous on $X$.	\\
	\n (ii) $u\in L^1(X,dV)$ and $\om+dd^c u\geq 0$ in the sense of current. }	
\end{definition}
\n By $PSH(X,\om)$ or $PSH(\om)$ we denote the set of $\om$-psh functions on $X$.
 Now we deal with the complex Monge-Amp\`ere operator on $PSH(\om)$. Let $u\in PSH(\om)\cap L^{\infty}(X)$.  Following the  fundamental work of Bedford and Taylor in \cite{BeTa82}, we know that the complex Monge-Amp\`ere
operator $(\om + dd^c u)^n$ is well-defined. Moreover, it is a positive Borel measure on $X$. For a detailed account of the existence of the operator 
$(\omega + dd^c u)^n$, where $u \in \text{PSH}(\omega) \cap L^{\infty}(X)$, 
we  refer readers to the construction provided before Proposition 2.1 in \cite{DK12} and more detail, in Proposition 1.2 of \cite{KN15}.
\begin{definition}\label{dn2.2}
{\fontsize{11 pt}{1}\selectfont		Let $(u_j)\subset PSH(\om)\cap L^{\infty}(X)$ and $u\in PSH(\om)\cap L^{\infty}(X)$. The  sequence of Monge-Amp\`ere operators $(\om+dd^c u_j)^n$ is called weak convergence to $(\om+ dd^c u)^n$ if for all $\psi\in C^{\infty}_0(X)$ the following condition holds
	$$\lim\limits_{j\to\infty}\int\limits_{X}\psi(\om+dd^c u_j)^n = \int\limits_{X}\psi (\om+ dd^c u)^n.$$  }
\end{definition}

\n where $C^{\infty}_0(X)$ is the set of smooth functions with compact support in $X$. Each $\psi\in C^{\infty}_0(X)$ also is called a test function. When the sequence of Monge-Amp\`ere operators $(\omega + dd^c u_j)^n$ is convergent weakly to $(\omega + dd^c u)^n$, we also say that the Monge-Amp\`ere operator is continuous, where \textquotedblleft continuity \textquotedblright is understood in the sense that it is continuous in the weak*-topology.

\n Now as in \cite{GZ05} we give the notion of capacity $Cap_{\om}$ of a Borel subset $E\subset X$.
\begin{definition}\label{dn2.3}
{\fontsize{11 pt}{1}\selectfont		Assume that $E$ is a Borel subset of $X$. Capacity of $E$ denoted by $Cap_{\om}(E)$ and is defined by
	$$Cap_{\om}(E)=\sup\Bigl\{\int\limits_{E}(\om+dd^c u)^n:  u\in PSH(\om),\  \ 0\leq u\leq 1\Bigl\}.$$  }
\end{definition}	
\n Next, we deal with the convergence in capacity of a sequence $(u_j)\subset PSH(\om)$ to a function $u\in PSH(\om)$ 
\begin{definition}\label{dn2.4}
{\fontsize{11 pt}{1}\selectfont		A sequence $(u_j)$ of $\om$-psh functions on $X$ is said to be convergence in capacity to a $\om$-psh $u$ if for all $\eta>0$ the following condition holds
	$$\lim\limits_{j\to\infty}Cap_{\om}\Bigl\{|u_j - u|> \eta\Bigl\}= 0. $$ }
\end{definition}
\n By repeating the same arguments as in the proof of Corollary 3.8 in \cite{GZ05} and arguments are explained  before  Proposition 2.1 in \cite{DK12}, we note that the quasi-continuity of $\omega$-plurisubharmonic functions  is still valid in Hermitian setting. Namely, we have the following result.
\begin{proposition}\label{md1.1}
{\fontsize{11 pt}{1}\selectfont		Let  $u\in PSH(\om)$ be a $\om$-psh function. Then for each $\varepsilon >0$  there exists an open subset $G\subset X$ with $Cap_{\om}(G) <\varepsilon$ and $u|_{X\setminus G}$ is continuous. }
\end{proposition}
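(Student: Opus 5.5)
We follow the Kähler argument of Guedj--Zeriahi (Corollary 3.8 in \cite{GZ05}). The first step is to reduce to bounded functions. Write $u_k=\max(u,-k)$; these are bounded $\om$-psh functions. Then $\{u\neq u_k\}=\{u<-k\}$, so it suffices to prove two facts.

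First, $Cap_{\om}(\{u<-k\})\to 0$ as $k\to\infty$. More precisely, we aim for $Cap_{\om}(\{u<-k\})\le C/k$.

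Second, each bounded $\om$-psh function is quasi-continuous.

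Granting both, fix $\varepsilon>0$. Choose $k$ with $Cap_{\om}(\{u<-k\})<\varepsilon/2$; since $u$ is upper semicontinuous, this sublevel set is open. Then choose an open $G'$ with $Cap_{\om}(G')<\varepsilon/2$ and $u_k|_{X\setminus G'}$ continuous. Set $G=G'\cup\{u<-k\}$. On $X\setminus G$ we have $u=u_k$, which is continuous there, and $Cap_{\om}(G)<\varepsilon$ by subadditivity. Subadditivity of $Cap_{\om}$ on countable unions is immediate from the definition, since for each fixed $v$ the set function $E\mapsto\int_E(\om+dd^cv)^n$ is a measure.

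For the bounded case, we use local smoothing. Cover $X$ by finitely many coordinate balls $B_i\Subset B_i'$ on which $\om\le dd^c\rho_i$ for smooth strictly psh $\rho_i$. On $B_i'$ the function $u+\rho_i$ is psh, so by the local quasi-continuity theorem of Bedford--Taylor there is an open $U_i\subset B_i'$ with small local capacity $C_n(U_i,B_i')$ such that $u+\rho_i$, and hence $u$, is continuous off $U_i$.

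The step we expect to be the main obstacle is comparing $Cap_{\om}$ with the local Bedford--Taylor capacities; this is exactly what the remarks before Proposition 2.1 in \cite{DK12} supply. Concretely, for $v\in PSH(\om)$ with $0\le v\le1$, on $B_i'$ the function $v+\rho_i$ is psh with oscillation bounded by a constant $A$ independent of $v$. Moreover $(\om+dd^cv)^n\le (dd^c(v+\rho_i))^n$ on $B_i'$, because $\om\le dd^c\rho_i$ and mixed Monge--Ampère terms of positive currents are positive. Hence
$$\int_{E\cap B_i}(\om+dd^cv)^n\le A^n\,C_n(E\cap B_i,B_i').$$
This gives $Cap_{\om}(E)\le C\sum_i C_n(E\cap B_i,B_i')$, and the union $G'=\bigcup_i(U_i\cap \bar B_i)$, suitably enlarged to an open set, has small $Cap_{\om}$.

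For the first claim, the non-Kähler case cannot use the Kähler identity $\int(\om+dd^cv)^n=\int\om^n$. Instead we use the local comparison again: in a chart,
$$C_n(\{u+\rho_i<-k+c\}\cap B_i,B_i')\le \frac{C}{k}\,\|u+\rho_i\|_{L^1(B_i')},$$
which is the standard local estimate for psh functions. The right-hand side is finite because $u\in L^1(X,dV)$. Summing over the finitely many charts gives $Cap_{\om}(\{u<-k\})\le C'/k\to0$.

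A minor issue remains: the sets $U_i\cap B_i$ must be taken open in $X$, so we use slightly shrunken balls to ensure the covering is by open sets. Continuity of $u$ on $X\setminus G$ is then local, so it follows from continuity on each $\bar B_i\setminus G$.
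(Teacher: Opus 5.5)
Your proposal is correct and follows essentially the argument the paper has in mind. The paper gives no proof beyond citing Corollary~3.8 of \cite{GZ05} and the remarks before Proposition~2.1 of \cite{DK12}. Those amount exactly to your domination of $Cap_{\om}$ by finitely many local Bedford--Taylor capacities (using $dd^c\rho_i\ge\om$ and positivity of the mixed terms), followed by local quasi-continuity.

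Two small simplifications are available. First, the reduction to bounded $u$ is unnecessary, because Bedford--Taylor's quasi-continuity theorem already covers the unbounded psh functions $u+\rho_i$. Second, $G'=\bigcup_i(U_i\cap B_i)$ is already open, so no enlarging is needed. It is cleanest to bound each $Cap_{\om}(U_i\cap B_i)\le A_i^n C_n(U_i,B_i')$ in its own chart and then use subadditivity, rather than inserting $G'$ into the summed bound, which would create cross-chart terms.
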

\n Next, we recall the notion of variation of a measure. Given a Borel measure $\mu$ (non-necessarily positive). One defines its variation $\|\mu\|$ on a Borel set $A$ by
$$\|\mu\|(A)=\sup\Bigl\{\sum\limits_{j=1}^{\infty}|\mu(A_j)|\Bigl\},$$

\n where $\{A_j\},j\geq 1$ is any disjoint at most countable partition of the set $A$ by Borel sets. The classical Hahn decomposition theorem states that in fact that it is enough to consider only the special partition given by the two Hahn sets,intuitively,the pieces where $\mu$ is positive and negative.
 To avoid lengthy expressions, we adopt the convention that from now on, when writing the Monge-Amp\`ere operator of a function  $ u \in PSH(\omega) \cap L^{\infty}(X)$, we use the notation $ \omega^n_u$. Thus, we understand $\omega^n_u = (\omega + dd^c u)^n$.   }

\section{Continuity of the complex Monge-Ammp\`ere operator on compact Hermitian manifolds}\label{sec3}
{\fontsize{11 pt}{1}\selectfont	
\n First, we recall the  following class of $\omega$-sh functions introduced and investigated in \cite{KN25}. Namely, let $(X,\om)$ be a compact Hermitian manifold mentioned in Section 2. By $\mathcal{P}_0$ we denote the set of $\om$-psh functions defined by
$$\mathcal{P}_0=\Bigl\{v\in PSH(\om)\cap L^{\infty}(X): \  \ \sup\limits_{X}v=0\Bigl\}.$$
\n  We begin the section by proving a following  result which serves as an essential tool for subsequent results.
\begin{theorem}\label{dl3.1}
{\fontsize{11 pt}{1}\selectfont		Let $(u_j)\subset PSH(\om)\cap \mathcal{P}_0$ be a uniformly bounded sequence of $\om$-psh. Assume that $(u_j)$ is convergent to $u\in \mathcal{P}_0$ in $Cap_{\om}$. Then $\om^n_{u_j}$ is weakly convergent to $\om^n_u$. }
\end{theorem}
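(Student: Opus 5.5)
Since $X$ is compact, every test function $\psi\in C^\infty_0(X)$ is simply a smooth function on $X$, and we must show that $\int_X\psi\,\om^n_{u_j}\to\int_X\psi\,\om^n_u$. Fix a constant $A>0$ with $-A\le u_j,u\le 0$ for all $j$. The plan follows Xing's local argument, adapted to the Hermitian setting where $d\om\neq 0$. The expansion is telescopic:
$$\om^n_{u_j}-\om^n_u=dd^c(u_j-u)\wed T_j,\qquad T_j=\sum_{k=0}^{n-1}\om_{u_j}^k\wed\om_u^{n-1-k},$$
so it suffices to treat each mixed term $\int_X\psi\,dd^c(u_j-u)\wed\om_{u_j}^k\wed\om_u^{n-1-k}$. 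We cannot integrate by parts freely, because $dd^c$ of the positive currents $\om_{u_j}^k\wed\om_u^{n-1-k}$ produces torsion terms involving $d\om$ and $dd^c\om$. Instead, the first step is to reduce to continuous functions using quasi-continuity, Proposition \ref{md1.1}.

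Fix $\ve>0$. By Proposition \ref{md1.1} there is an open set $G$ with $Cap_\om(G)<\ve$ such that $u|_{X\setminus G}$ is continuous. By Tietze extension, take a continuous $h$ on $X$ with $h=u$ off $G$ and $-A\le h\le 0$. The core of the argument is to bound
\[
\Bigl|\int_X\psi\,\om^n_{u_j}-\int_X\psi\,\om^n_u\Bigr|.
\]
We write $\om_{u_j}^n=\om_{u_j}^n\wed$ (nothing) and compare against the Bedford–Taylor style quantity $\int \psi (u_j-u)\,(\cdots)$. Concretely, for a mixed measure $\mu_j=\om_{u_j}^{k}\wed\om_u^{n-k}$ with bounded potentials, one has the basic estimate
\[
\mu_j(E)\le C(A)\,Cap_\om(E),
\]
obtained by rescaling: $(u_j+A)/A$ and $(u+A)/A$ are $\om$-psh functions with values in $[0,1]$ after dividing, and multilinearity gives $\om_{u_j}^k\wed\om_u^{n-k}\le C\sum \om_{v}^n$ for suitable convex combinations $v$ with $0\le v\le1$. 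This lets us discard the sets $G$ and $\{|u_j-u|>\eta\}$ at cost $O(\ve)$ and $o(1)$, uniformly in $j$.

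The main step, and the one I expect to be the obstacle, is the integration by parts against $\psi$. The plan is to show, by induction on $k$, that
\[
\int\psi\,\om_{u_j}^{k}\wed\om_u^{n-k}-\int\psi\,\om_{u_j}^{k-1}\wed\om_u^{n-k+1}\to0 .
\]
The difference equals $\int\psi\, dd^c(u_j-u)\wed S$, with $S=\om_{u_j}^{k-1}\wed\om_u^{n-k}$. Integrating by parts via Stokes (valid for bounded potentials by smooth approximation, as in \cite{KN15}), we get
\[
\int (u_j-u)\,dd^c(\psi S)=\int(u_j-u)\bigl(dd^c\psi\wed S+2\,d\psi\wed d^cS+\psi\,dd^cS\bigr)
\]
up to sign and conjugation conventions. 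The terms $dd^cS$ and $dS$ expand into sums of currents $\om_{u_j}^a\wed\om_u^b\wed\Theta$, with $\Theta$ smooth forms built from $d\om$, $d^c\om$ and $dd^c\om$. Such a $\Theta$ is dominated by $C\om^{2}$ (respectively $C\om$) in the sense of bounding the mass of wedge products with positive currents. The first-order terms $d\om\wed d^c u$ are the genuinely problematic ones. I would handle them with the Cauchy–Schwarz inequality, which bounds $|\int (u_j-u)\,d u\wed d^c\om\wed R|$ by $\bigl(\int|u_j-u|\,du\wed d^cu\wed\om\wed R\bigr)^{1/2}$ times a bounded factor, and control $du\wed d^cu$ by $dd^c u^2$ as in \cite{KN15}.

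In every resulting term we integrate $|u_j-u|$ against a measure dominated by $C\,Cap_\om$. We then split $X$ into $\{|u_j-u|>\eta\}$, where the contribution is at most $2A\,C\,Cap_\om\{|u_j-u|>\eta\}\to0$, and its complement, where the contribution is at most $C\eta$ because the total masses are uniformly bounded. The total masses $\int_X\om_{u_j}^a\wed\om_u^b\wed\om^{c}$ are uniformly bounded by a constant depending on $A$ (this is known in the Hermitian setting, cf. \cite{KN15,KN25}). Letting $j\to\infty$ and then $\eta\to0$ completes the induction, and summing over $k$ yields the theorem. Quasi-continuity is then only needed if one prefers a route that first replaces $u$ by $h$; the capacity-domination estimate already suffices in the route above.
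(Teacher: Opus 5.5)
Your argument is correct, and it takes a genuinely different route from the paper's.

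\textbf{The paper's route.} The paper never integrates by parts. After passing to an arbitrary subsequence, it quotes Proposition 2.5 of \cite{KN25} to get $\int_X|u_j-u|\,\omega^n_{u_j}\to 0$. It then quotes Lemma 2.4 of \cite{KN25} to extract a further subsequence with $\omega^n_{u_{j_k}}\to\omega^n_u$, and concludes by the sub-subsequence principle. All of the Hermitian analysis is delegated to \cite{KN25}.

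\textbf{Your route.} You prove the statement directly, in four steps:
\begin{itemize}
\item the telescoping identity $\omega^n_{u_j}-\omega^n_u=dd^c(u_j-u)\wedge T_j$;
\item two integrations by parts;
\item the observation that $d\omega_{u_j}=d\omega$ and $d^c\omega_{u_j}=d^c\omega$, so each term of $dd^c(\psi S)$ is a fixed smooth form wedged with a mixed product of $\omega_{u_j},\omega_u,\omega$;
\item the domination $\omega^a_{u_j}\wedge\omega^b_u\wedge\omega^p\le(3A)^n\omega^n_w$ with $w=\frac13\bigl(\frac{u_j+A}{A}+\frac{u+A}{A}\bigr)$, together with $Cap_\omega(X)<\infty$.
\end{itemize}
For the domination step you should take $A\ge 1$, so that the rescaled functions are $\omega$-psh. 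The result is the explicit bound $\bigl|\int_X\psi(\omega^n_{u_j}-\omega^n_u)\bigr|\le C\bigl(\eta+Cap_\omega(\{|u_j-u|>\eta\})\bigr)$, with $C$ depending only on $\psi$, $A$ and $(X,\omega)$.

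\textbf{What each approach buys.} Yours gives a quantitative estimate and convergence of the full sequence without any subsequence extraction. It never uses the normalization $\sup_X u_j=0$, and it applies verbatim to mixed Monge--Amp\`ere measures. The paper's route buys brevity, at the price of resting entirely on \cite{KN25}.

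\textbf{Two simplifications.}
\begin{itemize}
\item The torsion terms involve only $d\omega$, $d^c\omega$ and $dd^c\omega$. So after integrating by parts twice, no gradient terms such as $d\omega\wedge d^c u$ ever appear, and your Cauchy--Schwarz paragraph can be dropped. The quasi-continuity/Tietze setup can go too, as you already noticed, and no induction on $k$ is needed: each telescoping term is handled independently.
\item The integration-by-parts identity is most cleanly justified from the local definition of the products, rather than by smooth approximation. Locally $S$ is a sum of closed Bedford--Taylor currents $T$ wedged with smooth forms, and $dd^c w\wedge T:=dd^c(wT)$; a partition of unity then globalizes the identity.
\end{itemize}
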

\begin{proof}
{\fontsize{11 pt}{1}\selectfont		It is enough to show that every subsequence of the $\om^n_{u_j}$ has its subsequence which is weakly convergent to $\om^n_u$. Therefore, without loss of generality, we may assume that $(u_j)$ is a subsequence of the original sequence. From the hypothesis $(u_j)$ is convergent in $Cap_{\om}$ to $u$, by Proposition 2.5 in \cite{KN25} we have
\begin{equation}\label{3.1}
	\lim\limits_{j\to\infty} \int\limits_{X}|u_j-u| \om^n_{u_j}=0.
	\end{equation}
	
	\n Hence, coupling \eqref{3.1} and Lemma 2.4 in \cite{KN25} it follows that there exists a subsequence $(u_{j_k})$ of the $(u_j)$ such that $\om^n_{u_{j_k}}$ is weakly convergent to $\om^n_u$ and the desired conclusion follows. }
\end{proof}

\n Now we give the following result. Note that this result was mentioned in \cite{GZ05} and \cite{DK12}, but no proof was given there. By relying on the quasi-continuity of $\om$-psh functions and Theorem \ref{dl3.1}, we will provide a proof here in Hermitian setting. Namely, we have 
\begin{corollary}\label{hq3.1}
{\fontsize{11 pt}{1}\selectfont		Let $(u_j)$ be a  uniformly bounded sequence of $\om$-psh functions in $\mathcal{P}_0$. Assume that $(u_j)$ is decreasing to $u\in \mathcal{P}_0$. Then $\om^n_{u_j}$ is weakly convergent to $\om^n_u$. }
\end{corollary}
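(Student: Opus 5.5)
The plan is to reduce Corollary \ref{hq3.1} to Theorem \ref{dl3.1}. Since $(u_j)$ already lies in $\mathcal{P}_0$, is uniformly bounded, and $u\in\mathcal{P}_0$, the only hypothesis of Theorem \ref{dl3.1} still to be checked is that $u_j\to u$ in $Cap_{\om}$. So we fix $\eta>0$ and show
$$\lim_{j\to\infty}Cap_{\om}\{|u_j-u|>\eta\}=\lim_{j\to\infty}Cap_{\om}\{u_j-u>\eta\}=0,$$
where the equality of the two limits holds because $u_j\ge u$.

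The key tool is the quasi-continuity of Proposition \ref{md1.1}, combined with Dini's theorem. Fix $\varepsilon>0$. Apply Proposition \ref{md1.1} to $u$ and to each $u_j$ with tolerances $\varepsilon 2^{-j-1}$, and let $G$ be the union of the resulting open sets. Then $Cap_{\om}(G)<\varepsilon$, by countable subadditivity of $Cap_{\om}$; this follows directly from Definition \ref{dn2.3}, since each $\int_E\om_v^n$ is countably additive in $E$. On the compact set $K=X\setminus G$ all the functions $u_j$ and $u$ are continuous, and $u_j\downarrow u$ pointwise there. Dini's theorem then gives uniform convergence on $K$, so there is $j_0$ with $u_j-u\le\eta$ on $K$ for all $j\ge j_0$. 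Consequently $\{u_j-u>\eta\}\subset G$, hence $Cap_{\om}\{u_j-u>\eta\}\le Cap_{\om}(G)<\varepsilon$ for $j\ge j_0$. Letting $\varepsilon\to0$ proves convergence in capacity.

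With this in hand, Theorem \ref{dl3.1} yields that $\om^n_{u_j}$ converges weakly to $\om^n_u$. The main subtlety is not the capacity estimate, which is routine. It is that Theorem \ref{dl3.1} as written relies on a subsequence argument. For the corollary this causes no difficulty: every subsequence of a decreasing sequence is still decreasing to $u$, so the capacity convergence holds for every subsequence. Each subsequence therefore admits a further subsequence whose Monge-Amp\`ere measures converge weakly to $\om^n_u$, and the full sequence converges weakly.

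I would also double-check two technical points. First, the sets $\{u_j-u>\eta\}$ are Borel, so the capacity makes sense; this holds because $u_j$ and $u$ are Borel, and where $u=-\infty$ we use the convention $u_j-u=+\infty$ only on a pluripolar set, which is empty here since the functions are bounded. Second, Dini's theorem requires the limit $u$ to be continuous on $K$, and that is exactly why $u$ was included when building $G$.
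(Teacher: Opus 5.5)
Your proof is correct, but it gets the uniform estimate off a small open set differently from the paper.

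The paper applies quasi-continuity (Proposition \ref{md1.1}) only to the limit $u$. This gives a single open $G$ with $Cap_{\om}(G)<\varepsilon$ and $u|_{X\setminus G}$ continuous. It then uses Hartogs' lemma on the compact set $X\setminus G$ to get $u_j\le u+\eta$ there for $j\ge j_0$. Together with $u_j\ge u$, this gives $\{|u_j-u|>\eta\}\subset G$.

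You instead make all the $u_j$ and $u$ continuous on one common compact set and apply the classical Dini theorem. The cost is countable subadditivity of $Cap_{\om}$, which you correctly derive from Definition \ref{dn2.3}.

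Your route is elementary and self-contained, but it genuinely relies on monotonicity. The paper's Hartogs-lemma route only needs a lower bound $u_j\ge u$ (or $u_j\ge u-\frac1j$) plus $L^1$-convergence. That is why the paper can reuse ``the arguments in the proof of Corollary \ref{hq3.1}'' for non-monotone sequences, where Dini's theorem is not available: $\max\{u_j,u-\frac1j\}$ in Theorem \ref{dl3.2}, $\max\{u_j,u\}$ in Proposition \ref{md3.2}, and $u_j\ge u$ in Proposition \ref{md3.1}.

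You also do not need quasi-continuity of the $u_j$ at all. With only $u|_K$ continuous, the sets $K\cap\{u_j\ge u+\eta\}$ are closed, since $u_j-u$ is upper semicontinuous on $K$. They decrease in $j$ and have empty intersection, so by compactness one of them is empty. This Dini-type argument for upper semicontinuous functions makes your countable-union step unnecessary.

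Your final remark about subsequences is harmless but unnecessary, since Theorem \ref{dl3.1} is stated for the whole sequence.
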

\begin{proof}
{\fontsize{11 pt}{1}\selectfont		By Theorem \ref{dl3.1} it remains to show that $(u_j)$ is convergent to $u$ in $Cap_{\om}$. For $\varepsilon>0$, by the quasi-continuity of $u$ there exists an open subset $G\subset X$ such that $Cap_{\om}(G)< \varepsilon$ and $u$ is continuous on $X\setminus G$. Note that $X\setminus G$ is compact, by Hartogs lemma for $\eta> 0$ there exits $j_0>0$ such that $\forall\  \ j\geq j_0$ and  for all $ x\in X\setminus G$ we get that
	\begin{equation}\label{3.3}
	u_j(x)\leq u(x)+ \eta.
	\end{equation}
	
	\n On the other hand, by the hypothesis we have 
	$$u_j(x)\geq u(x)> u(x) - \eta,\forall\ \ x\in X.$$
	
	\n This fact together with \eqref{3.3} implies that for all $x\in X\setminus G$ and $j\geq j_0$ we deduce that
	\begin{equation}\label{3.4}
	u(x)-\eta <u_j(x)\leq u(x)+\eta.
	\end{equation}
	\n From \eqref{3.4} it follows that
	$$|u_j(x) - u(x)|<\eta,\forall\ \ j\geq j_0,\  \ x\in X\setminus G.$$
	
	\n Hence, $\Bigl\{|u_j(x)- u(x)|>\eta \Bigl\}\subset G$ for $j\geq j_0.$ This yields $$Cap_{\om}\Bigl(\Bigl\{|u_j(x)- u(x)|>\eta \Bigl\}\Bigl) \leq Cap_{\om}(G)<\varepsilon,$$ for $j\geq j_0$. Such as, the sequence $(u_j)$ is convergent to $u$ in $Cap_{\om}$ as we wanted. The proof is complete.  }
\end{proof}

\n The following result is an extension of Theorem 2 in \cite{Xi08} in  Hermitian setting.
\begin{theorem}\label{dl3.2}
{\fontsize{11 pt}{1}\selectfont		Assume that for all $v\in PSH(\om)\cap L^{\infty}(X)$ the following condition holds
	\begin{equation}\label{3.5}
	\int\limits_{X}\om^n_v=\int\limits_{X}\om^n.
	\end{equation}
	
	\n Let $(u_j)\subset \mathcal{P}_0$ be a uniformly bounded sequence of $\om$-psh functions and $u\in \mathcal{P}_0$. Suppose that $u_j\rightarrow u$ in $L^1(X,dV)$ and for all $\delta>0$, $\lim\limits_{j\to\infty} \int\limits_{\{u\geq u_j+\delta\}}\om^n_{u_j}= 0$. Then $\om^n_{u_j}\rightarrow \om^n_u$ weakly. }
\end{theorem}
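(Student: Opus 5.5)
The plan is to reduce the statement to Theorem \ref{dl3.1} by showing that the hypotheses force $u_j\to u$ in $Cap_{\om}$. By the standard subsequence argument used in the proof of Theorem \ref{dl3.1}, it suffices to show that every subsequence has a further subsequence along which $\om^n_{u_j}\to\om^n_u$ weakly. So I fix a subsequence, still written $(u_j)$, with $|u_j|\le M$. Since $u_j\to u$ in $L^1(X,dV)$, after passing to a further subsequence I may take $u_j\to u$ a.e. and also use Hartogs' lemma: $\limsup_j u_j\le u$ pointwise, and for every compact $K$ on which $u$ is continuous, $u_j\le u+\eta$ on $K$ for $j$ large. Combining this with quasi-continuity (Proposition \ref{md1.1}), exactly as in the proof of Corollary \ref{hq3.1}, gives $\lim_j Cap_{\om}\{u_j>u+\eta\}=0$ for every $\eta>0$. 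Half of convergence in capacity is therefore automatic.

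The remaining half is $\lim_j Cap_{\om}\{u_j<u-\eta\}=0$, and this is the main obstacle; it is exactly where the hypothesis $\int_{\{u\ge u_j+\delta\}}\om^n_{u_j}\to0$ and the mass condition \eqref{3.5} must enter. I will try a comparison principle argument in the spirit of Xing. Fix $\varphi\in PSH(\om)$ with $0\le\varphi\le1$, a small parameter $0<s<1$, and set $w=(1-s)u+s\varphi$, so that $w\le u+s$ and $\om_w\ge s\,\om_\varphi$. On the set $E_j=\{u_j<u-\eta\}$ we have $u_j<w-\eta+s$. Choosing $s<\eta/2$ gives
\begin{equation*}
E_j\subset\{u_j<w-\eta/2\}\subset\{u_j<u-\eta/2+s\}.
\end{equation*}
Under \eqref{3.5}, the Hermitian comparison principle (the modified version of Kolodziej--Nguyen, which holds once all bounded $\om$-psh functions have the same total Monge--Amp\`ere mass) should give
\begin{equation*}
s^n\int_{\{u_j<w-\eta/2\}}\om^n_\varphi\le\int_{\{u_j<w-\eta/2\}}\om^n_w\le \int_{\{u_j<w-\eta/2\}}\om^n_{u_j}\le\int_{\{u\ge u_j+\delta\}}\om^n_{u_j}
\end{equation*}
with $\delta=\eta/2-s>0$. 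The right-hand side tends to $0$ by hypothesis, and taking the supremum over $\varphi$ yields $Cap_{\om}(E_j)\to0$.

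If the clean comparison principle is not available in this Hermitian form, my fallback is to use \eqref{3.5} directly. Its role is to make the total masses $\int_X\om^n_{u_j}=\int_X\om^n_w$ equal, and I would run the Kolodziej--Nguyen proof of the comparison inequality, absorbing the torsion error terms by choosing $s$ and the level parameters small and letting $j\to\infty$.

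Having both halves, $u_j\to u$ in $Cap_{\om}$, and Theorem \ref{dl3.1} gives $\om^n_{u_j}\to\om^n_u$ weakly along the subsequence, which concludes the proof.
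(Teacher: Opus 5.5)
Your proof is correct, but it takes a different route from the paper. The paper never shows that $u_j\to u$ in $Cap_{\om}$. After extracting a subsequence with $\int_{\{u\ge u_j+1/j\}}\om^n_{u_j}\to 0$, it works with $\phi_j=\max\{u_j,u-1/j\}$. For $\phi_j$ the lower half of capacity convergence is automatic ($\phi_j\ge u-1/j$), and the upper half is the same Hartogs/quasi-continuity argument you give. Theorem \ref{dl3.1} then yields $\om^n_{\phi_j}\to\om^n_u$. Finally, plurifine locality on $\{u_j>u-1/j\}$ together with the mass condition \eqref{3.5} shows that the total variation of $\om^n_{u_j}-\om^n_{\phi_j}$ is at most $2\int_{\{u\ge u_j+1/j\}}\om^n_{u_j}\to 0$.

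You instead prove the lower half $Cap_{\om}\{u_j<u-\eta\}\to 0$ directly. You do this with the comparison-principle estimate that the paper only uses later, in the proof of Proposition \ref{md3.1}. The inequality you need is exactly \eqref{3.9}. Under \eqref{3.5} it follows from plurifine locality by the usual argument with $\max\{u_j,v-\ve\}$, so your fallback is not needed.

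What each route buys: yours gives the stronger intermediate conclusion that the hypotheses force $u_j\to u$ in $Cap_{\om}$ for the whole sequence, with no diagonal extraction, at the cost of invoking the full comparison principle. The paper's route applies locality and equality of total masses only to one explicit pair of functions, and avoids any capacity estimate for $u_j$ itself.

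One slip needs repair. The bound $w=(1-s)u+s\va\le u+s$ is false, since $-su$ can be as large as $sM$; what holds is $u\le w\le u+s(1+M)$. Choose $s$ with $s(1+M)<\eta/2$ and set $\delta=\eta/2-s(1+M)>0$. Then $E_j\subset\{u_j<w-\eta/2\}\subset\{u\ge u_j+\delta\}$, and $Cap_{\om}(E_j)\le s^{-n}\int_{\{u\ge u_j+\delta\}}\om^n_{u_j}\to 0$ with a bound independent of $\va$, as you intended. This is essentially the choice of $s$ made in the proof of Proposition \ref{md3.1}.
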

\begin{proof}
{\fontsize{11 pt}{1}\selectfont		It is enough to show that any subsequence of the $(\om^n_{u_j})$ has its subsequence which is weakly convergent to $\om^n_u$. So we may assume that $$\int\limits_{\{u\geq u_j+\frac{1}{j}\}} \om^n_{u_j}\rightarrow 0\ \text{as} \ j\to\infty.$$ Put $\phi_j=\max\{u_j,u -\frac{1}{j}\}$. Then $\phi_j\in \mathcal{P}_0$ and the sequence $(\phi_j)$ is uniformly bounded on $X$. Now we prove that $\phi_j$ is convergent to $u$ in $Cap_{\om}$. Indeed, we have\begin{equation}\label{3.6}
	\left|u_j-\left(u-\frac{1}{j}\right)\right| =\phi_j-u_j + \phi_j-\left(u-\frac{1}{j}\right).
	\end{equation}
	
	\n Since $u_j\rightarrow u$ in $L^1(X,dV)$ then $\int\limits_{X}|u_j-u|dV\rightarrow 0$ as $j\to\infty$. On the other hand, we have
	$$\int\limits_{X}\left|u_j-\left(u-\frac{1}{j}\right)\right|dV\leq \int\limits_{X}|u_j-u|dV+ \frac{1}{j} \int\limits_{X}dV\rightarrow 0,\  \ \text{as $j\to\infty$.}$$
	
	\n However, by \eqref{3.6} it follows that
	$$0\leq \phi_j-\left(u-\frac{1}{j}\right)\leq \left|u_j-\left(u-\frac{1}{j}\right)\right|,$$
	
	\n then we get that $\phi_j\rightarrow u$ in $L^1(X,dV)$. By Proposition \ref{md1.1} for $\varepsilon>0$ there exists an open $G\subset X$ with $Cap_{\om}(G)<\varepsilon$ and $u$ is continuous on $X\setminus G$. Using Hartogs lemma for $\eta>0$  we can find $j_0>0$ such that for $j\geq j_0$, $\frac{1}{j}<\eta$ and
	$$ \phi_j\leq u+\eta,$$
	
	\n on $X\setminus G$. But we have $\phi_j\geq u-\frac{1}{j} > u-\eta$ on $X$. It follows that on $X\setminus G$ we get that
	$$|\phi_j - u|<\eta,\  \ \forall j\geq j_0.$$	
	\n Hence, using the arguments in the proof of Corollary \ref{hq3.1} we infer that $\phi_j\rightarrow u$ in $Cap_{\om}$ as $j\to\infty$. Theorem \ref{dl3.1} implies that $\om^n_{\phi_j}\rightarrow \om^n_u$ weakly in $X$ as $j\to\infty$. On the other hand, by the proof of Proposition 3.1 in  \cite{DK12} we get that
	\begin{equation}\label{3.7}
	1_{\{u_j>u-\frac{1}{j}\}}\om^n_{\phi_j}= 1_{\{u_j>u-\frac{1}{j}\}} \om^n_{u_j}.
	\end{equation}
	\n From \eqref{3.7} we infer that
	\begin{align*}
	\om^n_{u_j} - \om^n_{\phi_j}&= 1_{\{u_j\leq u-\frac{1}{j}\}}\Bigl(\om^n_{u_j}- \om^n_{\phi_j}\Bigl)
	= -1_{\{u_j\leq u-\frac{1}{j}\}}\om^n_{\phi_j}+ 1_{\{u_j\leq u-\frac{1}{j}\}} \om^n_{u_j}.
	\end{align*}
	
	\n By the hypothesis $$\int\limits_{\{u_j\leq u-\frac{1}{j}\}}\om^n_{u_j}= \int\limits_{\{u\geq u_j+\frac{1}{j}\}}\om^n_{u_j}\rightarrow 0 \ \text{as}\ j\to\infty.$$ On the other  hand, by using \eqref{3.5} and \eqref{3.7} we get that
	\begin{equation*}\label{3.8}
	\begin{aligned}
	\int\limits_{X}1_{\{u_j\leq u-\frac{1}{j}\}}\om^n_{\phi_j}&=\int\limits_{X}\om^n_{\phi_j}- \int\limits_{\{u_j> u-\frac{1}{j}\}}\om^n_{\phi_j}=\int\limits_{X}\om^n- \int\limits_{\{u_j> u-\frac{1}{j}\}}\om^n_{\phi_j}\\
	&=\int\limits_{X}\om^n_{u_j}- \int\limits_{\{u_j> u-\frac{1}{j}\}}\om^n_{u_j}= \int\limits_{\{u-\frac{1}{j}\geq  u_j\}}\om^n_{u_j}= \int\limits_{\{u\geq u_j +\frac{1}{j}\}}\om^n_{u_j}\rightarrow 0,
	\end{aligned}
	\end{equation*}
	
	\n as $j$ tends to $\infty$ by the hypothesis. Hence, from the arguments presented above, we infer that  $\om^n_{u_j} - \om^n_{\phi_j}\rightarrow 0$ weakly in $X$ as $j\to\infty$. However, we have
	$$\om^n_{u_j} -\om^n_{u}= \om^n_{u_j} - \om^n_{\phi_j} + \om^n_{\phi_j}-\om^n_u\rightarrow 0,$$
	
	\n  weakly in $X$ as $j\to\infty$. Thus we achieve that $\om^n_{u_j}\rightarrow \om^n_u$ weakly as $j\to\infty$ as we wanted. The proof is complete. }
\end{proof}

\n From Theorem \ref{dl3.2} we obtain the following corollary which is an extension of Corollary 2 in \cite{Xi08} in  Hermitian setting.
\begin{corollary}\label{hq3.2}
{\fontsize{11 pt}{1}\selectfont		Let the condition \eqref{3.5} be satisfied and $(u_j)\subset\mathcal{P}_0$ be a uniformly bounded sequence of $\om$-psh functions and $u\in \mathcal{P}_0$, $u_j\rightarrow u$ in $L^1(X,dV)$. If $\om^n_{u_j}= f_j dV$ with $\sup\limits_{j}\int\limits_{X}f_j^pdV<\infty, p>1$. Then $\om^n_{u_j}$ is weakly convergent to $\om^n_u$ as $j\to\infty$ in $X$.  }
\end{corollary}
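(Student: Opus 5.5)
The plan is to reduce Corollary \ref{hq3.2} to Theorem \ref{dl3.2}. Condition \eqref{3.5} is assumed, $(u_j)\subset\mathcal{P}_0$ is uniformly bounded, $u\in\mathcal{P}_0$, and $u_j\to u$ in $L^1(X,dV)$. So the only hypothesis of Theorem \ref{dl3.2} left to check is that, for every $\delta>0$,
$$\lim_{j\to\infty}\int_{\{u\geq u_j+\delta\}}\om^n_{u_j}=0.$$

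\textbf{Step 1 (Hölder).} Write $E_j=\{u\geq u_j+\delta\}$ and let $q=p/(p-1)$ be the conjugate exponent. Hölder's inequality gives
$$\int_{E_j}\om^n_{u_j}=\int_{E_j}f_j\,dV\leq \Bigl(\int_X f_j^p\,dV\Bigr)^{1/p}\, \bigl(V(E_j)\bigr)^{1/q},$$
where $V(E_j)=\int_{E_j}dV$. The first factor is bounded uniformly in $j$ by the assumption $\sup_j\int_X f_j^p\,dV<\infty$.

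\textbf{Step 2 (Chebyshev).} On $E_j$ we have $|u_j-u|\geq\delta$. Hence
$$V(E_j)\leq \frac{1}{\delta}\int_X|u_j-u|\,dV\to 0,$$
using the $L^1(X,dV)$ convergence. Combining with Step 1 and $1/q>0$, the integral $\int_{E_j}\om^n_{u_j}$ tends to $0$. Theorem \ref{dl3.2} then yields $\om^n_{u_j}\to\om^n_u$ weakly.

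\textbf{Main point to watch.} The only delicate issue is measurability and the meaning of the sets. We have $u_j,u\in L^\infty$, and $E_j$ is a Borel set because $u$ and $u_j$ are upper semicontinuous. Also, $\om^n_{u_j}=f_j\,dV$ charges no $dV$-null sets, so Hölder applies directly, with no pluripolar subtlety. The case $p=\infty$ is not needed, since $p>1$ is finite. No other obstacle is expected.
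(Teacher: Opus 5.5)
Your proposal is correct and is essentially the paper's proof: both verify the remaining hypothesis of Theorem \ref{dl3.2} by H\"older's inequality with the uniform $L^p$ bound on $f_j$, followed by the Chebyshev estimate $\int_{\{u\geq u_j+\delta\}}dV\leq \delta^{-1}\int_X|u_j-u|\,dV$. (The paper's display writes the set as $\{u\geq u_j+\frac{1}{j}\}$ in the middle step, which is a typo; your version with $\delta$ throughout is the right one.)
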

\begin{proof}
{\fontsize{11 pt}{1}\selectfont		For each $\delta>0$ by using the H\"older inequality we get that 
	\begin{align*}
	\int\limits_{\{u\geq u_j+\delta\}}\om^n_{u_j}= \int\limits_{\{u\geq u_j+\delta\}} f_jdV&\leq \Bigl(\int\limits_{X} f_j^p dV\Bigl)^{\frac{1}{p}}\Bigl(\int\limits_{\{u\geq u_j+\frac{1}{j}\}}dV\Bigl)^{1-\frac{1}{p}}\\
	&\leq \delta^{\frac{1}{p}-1}\sup\limits_{j}\Bigl(\int\limits_{X} f_j^pdV\Bigl)^{\frac{1}{p}}\Bigl(\int\limits_{X}|u_j-u|dV\Bigl)^{1-\frac{1}{p}}\rightarrow 0, 
	\end{align*}
	
	\n as $j\to\infty$ by the hypothesis $u_j\to u$ in $L^1(X,dV)$. Hence, by Theorem \ref{dl3.2} the proof is complete. }
\end{proof}

\n Next, we have the following result. 
\begin{corollary}\label{hq3.3}
{\fontsize{11 pt}{1}\selectfont		Let the condition \eqref{3.5} be satisfied and $(u_j)\subset \mathcal{P}_0$ be a uniformly bounded sequence of $\om$-psh functions and $u\in \mathcal{P}_0$. Assume that $u_j\rightarrow u$ in $L^1(X,dV)$ and there exists $C>0$ such that for all $j\geq 1$, $\om^n_{u_j}\leq CdV$. Then $\om^n_{u_j}\rightarrow \om^n_u$ weakly as $j\to\infty$ in $X$. }
\end{corollary}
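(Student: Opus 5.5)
The plan is to reduce Corollary \ref{hq3.3} to Theorem \ref{dl3.2}, in the same way Corollary \ref{hq3.2} was handled. Condition \eqref{3.5} is assumed, and $(u_j)\subset\mathcal{P}_0$ is uniformly bounded with $u_j\to u$ in $L^1(X,dV)$. So the only hypothesis of Theorem \ref{dl3.2} left to check is
$$\lim_{j\to\infty}\int\limits_{\{u\geq u_j+\delta\}}\om^n_{u_j}=0 \quad\text{for every } \delta>0.$$

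To verify it, fix $\delta>0$. The bound $\om^n_{u_j}\leq C\,dV$ gives
$$\int\limits_{\{u\geq u_j+\delta\}}\om^n_{u_j}\leq C\int\limits_{\{u\geq u_j+\delta\}}dV .$$
On the set $\{u\geq u_j+\delta\}$ we have $|u_j-u|\geq\delta$, so Chebyshev's inequality yields
$$\int\limits_{\{u\geq u_j+\delta\}}dV\leq \frac{1}{\delta}\int\limits_{X}|u_j-u|\,dV .$$
The right-hand side tends to $0$ by the $L^1$ convergence. Hence the required limit holds, and Theorem \ref{dl3.2} gives $\om^n_{u_j}\to\om^n_u$ weakly.

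Alternatively, one could note that $\om^n_{u_j}\leq C\,dV$ means $\om^n_{u_j}=f_j\,dV$ with $0\leq f_j\leq C$. This follows from Radon--Nikodym, since $\om^n_{u_j}$ is then absolutely continuous with respect to $dV$. Then $\sup_j\int_X f_j^p\,dV\leq C^p\int_X dV<\infty$ for any $p>1$, and Corollary \ref{hq3.2} applies directly.

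There is no real obstacle here. The only points needing care are that the measure domination gives a genuine density bound, which is immediate by Radon--Nikodym, and that $\int_X dV<\infty$, which is part of the standing assumptions of Section \ref{sec2}. I would present the direct Chebyshev argument, since it avoids densities entirely.
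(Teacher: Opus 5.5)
Your proposal is correct and is the paper's own argument: the bound $\om^n_{u_j}\leq C\,dV$ combined with Chebyshev gives $\int_{\{u\geq u_j+\delta\}}\om^n_{u_j}\leq \frac{C}{\delta}\int_X|u-u_j|\,dV\to 0$, and Theorem \ref{dl3.2} then yields the weak convergence. Your alternative route through Corollary \ref{hq3.2}, using the density $0\leq f_j\leq C$ so that $\sup_j\int_X f_j^p\,dV\leq C^p\int_X dV<\infty$, is also valid.
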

\begin{proof}
{\fontsize{11 pt}{1}\selectfont		Indeed, for $\delta>0$ we have
	$$\int\limits_{\{u\geq u_j+\delta\}}\om^n_{u_j}\leq C\int\limits_{\{u\geq u_j+\delta\}} dV\leq \frac{C}{\delta}\int\limits_{X}|u-u_j|dV\rightarrow 0,$$
	
	\n as $j\to\infty$ and  the desired conclusion follows.}
\end{proof}

\n By using the same arguments as in the proof of Corollary \ref{hq3.3} we note that  Corollary \ref{hq3.3} holds for a positive Radon measure $\mu$ on $X$. Namely, we have.
\begin{corollary}\label{hq3.4}
{\fontsize{11 pt}{1}\selectfont		Let the condition \eqref{3.5} be satisfied and $(u_j)\subset \mathcal{P}_0$ be an uniformly bounded sequence of $\om$-psh functions and $u\in \mathcal{P}_0$. Assume $\mu$ is a positive Radon measure on $X$ with $\mu(X)<\infty$. If $u_j\rightarrow u $ in $L^1(X,d\mu)$ and $\om^n_{u_j}\leq Cd\mu, C>0$ for all $j\geq 1$. Then $\om^n_{u_j}$ is weakly convergent to $\om^n_u$ in $X$.  }
\end{corollary}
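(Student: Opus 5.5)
The plan is to reduce Corollary \ref{hq3.4} to Theorem \ref{dl3.2}, exactly as in Corollary \ref{hq3.3}. Theorem \ref{dl3.2} asks for two things: $u_j\to u$ in $L^1(X,dV)$, and
$$\lim_{j\to\infty}\int_{\{u\geq u_j+\delta\}}\om^n_{u_j}=0 \quad\text{for every }\delta>0.$$

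\emph{The tail condition.} This part is a Chebyshev estimate against $\mu$ instead of $dV$. Fix $\delta>0$. On the set $\{u\geq u_j+\delta\}$ we have $|u-u_j|/\delta\geq 1$. Combining this with $\om^n_{u_j}\leq C\,d\mu$ gives
$$\int_{\{u\geq u_j+\delta\}}\om^n_{u_j}\leq C\,\mu\bigl(\{u\geq u_j+\delta\}\bigr)\leq \frac{C}{\delta}\int_X|u-u_j|\,d\mu .$$
The right-hand side tends to $0$ by the assumption $u_j\to u$ in $L^1(X,d\mu)$. Both sets are Borel because $u$ and $u_j$ are upper semicontinuous, so the integrals make sense.

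\emph{The $L^1(X,dV)$ convergence.} This is the step I expect to be the main obstacle. The proof of Theorem \ref{dl3.2} uses $L^1(X,dV)$ convergence in only one place: to show that $\phi_j=\max\{u_j,u-\frac1j\}\to u$ in $L^1$. That in turn serves only to get the Hartogs-type bound $\phi_j\leq u+\eta$ on $X\setminus G$, and from it convergence in $Cap_\om$. I would obtain $L^1(dV)$ convergence by compactness instead.

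\begin{itemize}
\item The sequence $(u_j)\subset\mathcal{P}_0$ is uniformly bounded, so it is relatively compact in $L^1(X,dV)$.
\item Any $L^1(dV)$-limit point $v$ is $\om$-psh with $\sup_X v=0$.
\item Passing to a further subsequence converging a.e. and in $L^1(dV)$ to $v$, I want to identify $v=u$. Here I would use the $L^1(\mu)$ convergence. The issue is that $\mu$ may charge sets where this is not automatic, so $v=u$ $\mu$-a.e. does not immediately give $v=u$ everywhere.
\end{itemize}

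My first attempt at the identification uses the domination $\om^n_{u_j}\leq C\mu$. This forces $\mu$ to carry the Monge--Amp\`ere mass: by \eqref{3.5}, $\mu(X)\geq C^{-1}\int_X\om^n>0$. I would then try a domination-principle argument showing $v\geq u$ and $v\leq u$. If that fails, I would instead impose, as the natural reading intended by ``the same arguments as in Corollary \ref{hq3.3}'', that $u_j\to u$ also in $L^1(X,dV)$ (or that $dV\ll\mu$), so that Theorem \ref{dl3.2} applies directly.

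\emph{Conclusion.} Once both hypotheses are in place, Theorem \ref{dl3.2} gives $\om^n_{u_j}\to\om^n_u$ weakly for every subsequence. Since every subsequence has a further subsequence with the same limit, the whole sequence converges.
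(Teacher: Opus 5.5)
\paragraph{The open step cannot be closed.} You have located exactly the right weak point, but the step you leave open (identifying the $L^1(dV)$-limit point $v$ with $u$) is not just hard. Corollary \ref{hq3.4} is false as stated, so no domination-principle argument can produce $v\le u$.

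Take $X=\mathbb{P}^1$ with $\omega=\omega_{FS}=dd^c\rho$, where $\rho=\frac12\log(1+|z|^2)$ on the affine chart. This is a K\"ahler form, so \eqref{3.5} holds. Fix $0<r<1$ and set
\[
v=\log\max\{|z|,1\}-\rho,\qquad u=\log\max\{|z|,r\}-\rho .
\]
These functions have the following properties:
\begin{itemize}
\item Both are continuous on $\mathbb{P}^1$, both are $\le 0$, and both equal $0$ at $\infty$, so $u,v\in\mathcal{P}_0$.
\item $\omega_v=\sigma_1$ and $\omega_u=\sigma_r$, the normalized arc-length measures on $\{|z|=1\}$ and $\{|z|=r\}$.
\item $u=v$ on $\{|z|\ge 1\}$.
\end{itemize}
Now put $u_j:=v$ for all $j$, $\mu:=\sigma_1$ and $C:=1$. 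Every hypothesis holds: $\omega_{u_j}=\mu$ and $\int_X|u_j-u|\,d\mu=0$. Yet $\omega_{u_j}=\sigma_1$ does not converge to $\sigma_r=\omega_u$.

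In this example your $L^1(dV)$-limit point is $v$, and it satisfies $v\ge u$ with $v\ne u$. Convergence in $L^1(\mu)$ only sees the functions on $\mathrm{supp}\,\mu$; off that set, $u$ can be pushed down freely while staying in $\mathcal{P}_0$. So a domination argument could at best give $v\ge u$, and the reverse inequality genuinely fails.

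\paragraph{The paper's proof has the same gap, and your fallback repairs it.} The paper's proof is the one-line remark that the argument of Corollary \ref{hq3.3} carries over. That is your Chebyshev estimate against $\mu$ followed by Theorem \ref{dl3.2}. It silently uses $u_j\to u$ in $L^1(X,dV)$. Theorem \ref{dl3.2} needs this to get $\phi_j=\max\{u_j,u-\frac1j\}\to u$ and then apply Hartogs' lemma, but the hypotheses of the corollary do not provide it.

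Your fallback is the correct fix. Either add the hypothesis $u_j\to u$ in $L^1(X,dV)$, or assume $dV\ll\mu$. In the second case, since both measures are finite and the $u_j$ are uniformly bounded, convergence in $L^1(\mu)$ gives convergence in $\mu$-measure, hence in $dV$-measure, hence in $L^1(dV)$. With either addition your argument is complete and coincides with the paper's intended one.
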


\n Next, we have a notable result concerning the weak continuity of a sequence of Monge-Amp\`ere operator in Hermitian setting.
\begin{proposition}\label{md3.1}
{\fontsize{11 pt}{1}\selectfont		Let the condition \eqref{3.5} be satisfied and $(u_j)\subset \mathcal{P}_0$ be a uniformly bounded sequence of $\om$-psh functions and $u\in \mathcal{P}_0$. Assume that $u_j\rightarrow u$ in $L^1(X,dV)$ and $\om^n_{u_j}$ vanishes on the set $\{u_j< u\}$ for all $j$. Then $\om^n_{u_j}$ is weakly convergent to $\om^n_u$ in $X$. }
\end{proposition}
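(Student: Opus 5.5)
This will follow from Theorem \ref{dl3.2}: it suffices to check its hypothesis, namely that for every $\delta>0$
$$\lim_{j\to\infty}\int_{\{u\geq u_j+\delta\}}\om^n_{u_j}=0 .$$
The standing assumptions of Theorem \ref{dl3.2} are already part of the statement: \eqref{3.5}, $(u_j)\subset\mathcal{P}_0$ uniformly bounded, $u\in\mathcal{P}_0$, and $u_j\to u$ in $L^1(X,dV)$.

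The key step is the set inclusion. For $\delta>0$,
$$\{u\geq u_j+\delta\}=\{u_j\leq u-\delta\}\subset\{u_j<u\}.$$
Indeed, $u_j$ is bounded, so $u_j>-\infty$ everywhere. At any point of the left-hand set we therefore have $u\ge u_j+\delta>-\infty$, and hence $u_j\le u-\delta<u$. Because $\om^n_{u_j}$ vanishes on the Borel set $\{u_j<u\}$ (both functions are Borel, being upper semicontinuous), we get
$$\int_{\{u\geq u_j+\delta\}}\om^n_{u_j}\le \om^n_{u_j}(\{u_j<u\})=0$$
for every $j$ and every $\delta>0$. In particular the limit is $0$.

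Applying Theorem \ref{dl3.2} then gives $\om^n_{u_j}\to\om^n_u$ weakly. I expect no real obstacle beyond this. The only points needing care are the measurability of the sets involved and the fact that the hypothesis of Theorem \ref{dl3.2} is needed only for each fixed $\delta$, which is exactly what we verified. The subsequence with $\delta=1/j$ used inside the proof of Theorem \ref{dl3.2} is handled there and requires nothing further here.
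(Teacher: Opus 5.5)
Your argument is correct, but it takes a different route from the paper.

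\textbf{Your route.} You reduce directly to Theorem~\ref{dl3.2}. Since $\{u\ge u_j+\delta\}\subset\{u_j<u\}$, its hypothesis holds in the strongest form: the integrals vanish for every $j$, not just in the limit. All the work is thus delegated to that theorem's proof, i.e.\ to the functions $\phi_j=\max\{u_j,u-\frac1j\}$, the plurifine locality \eqref{3.7}, and the total-mass identity coming from \eqref{3.5}.

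\textbf{The paper's route.} The paper does not invoke Theorem~\ref{dl3.2} here.
\begin{enumerate}
\item It uses \eqref{3.5} through the comparison principle (Proposition 3.1 of \cite{DK12}), applied to $u_j$ and $(1-s)u+s\varphi-\varepsilon$, where $\varphi\in PSH(\omega)$, $-1\le\varphi\le 0$, and $s$ is small.
\item This gives $Cap_{\omega}(\{u_j<u-2\varepsilon\})=0$, and hence $u_j\ge u$ on all of $X$.
\item The inequality $u_j\ge u$ together with $L^1$ convergence then gives $u_j\to u$ in $Cap_{\omega}$, by the Hartogs-lemma and quasi-continuity argument of Corollary~\ref{hq3.1}.
\item Theorem~\ref{dl3.1} concludes.
\end{enumerate}

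\textbf{What each buys.} The paper's route proves more than the statement: the pointwise inequality $u_j\ge u$, and convergence in capacity of the potentials themselves rather than only weak convergence of the measures. Also, everything after the comparison step uses only $Cap_{\omega}(\{u_j<u-2\varepsilon\})=0$, which is why Corollary~\ref{hq3.5} is read off from that proof.

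Your route is shorter and never needs the comparison principle; \eqref{3.5} enters only through the mass balance. It also covers Corollary~\ref{hq3.5} just as easily. If $-M\le u_j\le 0$ with $M\ge1$, then $u_j/M+1$ is a competitor in the definition of $Cap_{\omega}$ and satisfies $\omega^n_{u_j/M+1}\ge M^{-n}\omega^n_{u_j}$. Hence $\omega^n_{u_j}(E)\le M^n Cap_{\omega}(E)$ for Borel $E$. So $Cap_{\omega}(\{u_j<u\})=0$ forces $\omega^n_{u_j}(\{u_j<u\})=0$, and your argument applies.
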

\begin{proof}
{\fontsize{11 pt}{1}\selectfont		First, we prove $u_j\geq u$ on $X$ for all $j$. By the hypothesis $u\leq 0$ on $X$. Given $\varepsilon >0$. Choose $1>s>0$ such that $s\max\{1,\sup\limits_{X}|u|\}<\varepsilon$. Assume that $\va \in PSH(\om)$ with $-1\leq\va\leq 0$. Due to the condition \eqref{3.5}, by Proposition 3.1 in \cite{DK12}, the comparison principle holds for bounded $\omega$-plurisubharmonic functions. That is, we have
	\begin{equation}\label{3.9}
	\int\limits_{\{u<v\}}\om^n_v\leq\int\limits_{\{u<v\}}\om^n_u,
	\end{equation}
	
	\n for $u,v\in PSH(\om)\cap L^{\infty}(X)$. Now for all $j\geq 1$, by using the comparison principle \eqref{3.9} we get that
	\begin{align*}
	 s^n\int\limits_{\{u_j< u-2\ve\}}\om^n_{\va}&\leq \int\limits_{\{u_j< u-2\ve\}}(\om+ dd^c ((1-s)u+s\va)))^n\\
	&\leq \int\limits_{\{u_j< (1-s)u+s\va-\ve\}}\Bigl(\om+ dd^c\Bigl[(1- s)u + s\va\Bigl]\Bigl)^n\\
	&\leq \int\limits_{\{u_j< (1-s)u+s\va-\ve\}} (\om + dd^c u_j)^n\leq \int\limits_{\{u_j< u\}} \om^n_{u_j}= 0.
	\end{align*}
	
	\n Taking the supremum over all such $\varphi$, by the definition of $Cap_{\omega}$, we get  that $$Cap_{\om}\Bigl(\{u_j < u-2\ve\}\Bigl) = 0\ \text{for all}\  j.$$ Hence, $u_j\geq u-2\ve$ on $X$ for all $j$. Let $\ve\searrow 0$ we deduce that $u_j\geq u$ on $X$ for all $j$ and the desired conclusion follows. By repeating the same arguments as in the proof of Corollary \ref{hq3.1} we infer that $u_j$ is convergent to $u$ in $Cap_{\om}$. By Theorem \ref{dl3.1}, $\om^n_{u_j}\rightarrow \om^n_u$ weakly on $X$. The proof is complete. }
\end{proof}

\n From the proof of Proposition \ref{md3.1} we also have the following result.
\begin{corollary}\label{hq3.5}
{\fontsize{11 pt}{1}\selectfont		Let the condition \eqref{3.5} be satisfied and $(u_j)\subset\mathcal{P}_0$ be an uniformly bounded sequence of $\om$-psh functions and $u\in \mathcal{P}_0$.  Assume that $u_j\rightarrow u$ in $L^1(X,dV)$ and $Cap_{\om}\Bigl(\{u_j< u\}\Bigl)= 0$ for all $j$. Then $\om^n_{u_j}$ is weakly convergent to $\om^n_u$ in $X$.	}
\end{corollary}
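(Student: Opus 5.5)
The argument follows the proof of Proposition \ref{md3.1}. The hypothesis $Cap_{\om}(\{u_j<u\})=0$ replaces the vanishing of $\om^n_{u_j}$ on $\{u_j<u\}$. So the plan is to show first that $u_j\geq u$ everywhere on $X$, then that $u_j\to u$ in $Cap_{\om}$, and finally to apply Theorem \ref{dl3.1}.

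\textbf{Step 1: $u_j\geq u$ on $X$.} Fix $\ve>0$ and choose $s\in(0,1)$ with $s\max\{1,\sup_X|u|\}<\ve$. Let $\va\in PSH(\om)$ with $-1\leq\va\leq 0$. As in Proposition \ref{md3.1}, one has the inclusion
$$\{u_j<u-2\ve\}\subset\{u_j<(1-s)u+s\va-\ve\}\subset\{u_j<u\}.$$
Since $\om_{u_j}^n\geq0$ and $\om_{(1-s)u+s\va}^n\geq s^n\om_\va^n$, the comparison principle \eqref{3.9} (valid under \eqref{3.5} by Proposition 3.1 in \cite{DK12}) gives
$$s^n\int_{\{u_j<u-2\ve\}}\om^n_\va\leq\int_{\{u_j<u\}}\om^n_{u_j}.$$
The remaining point is that $\om^n_{u_j}$ does not charge sets of zero capacity. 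For $0\leq \psi\leq1$ $\om$-psh we have $\int_E\om_\psi^n\leq Cap_{\om}(E)$ by definition. A bounded $u_j$ with $-M\leq u_j\leq 0$ rescales as $\psi=(u_j+M)/M$, which is $\om/M$-psh rather than $\om$-psh. To handle this, I expect to use $\om^n_{u_j}\leq M'\,\om^n_{\psi'}$ type bounds. More simply, I would invoke the standard estimate $\om^n_{u_j}(E)\leq C(M)\,Cap_{\om}(E)$ for uniformly bounded $\om$-psh functions. This follows from $\om+dd^cu_j\leq M(\om+dd^c(u_j/M+1))$ when $M\geq1$, since $u_j/M$ is $\om/M$-psh and hence $\om$-psh. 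Expanding the resulting wedge power gives $\om^n_{u_j}\leq M^n\om^n_{u_j/M+1}$ with $0\leq u_j/M+1\leq1$.

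Hence $\int_{\{u_j<u\}}\om_{u_j}^n\leq M^nCap_{\om}(\{u_j<u\})=0$. Taking the supremum over $\va$ gives $Cap_{\om}(\{u_j<u-2\ve\})=0$. A Borel set of zero capacity has zero $dV$-measure, because $\om^n$ itself competes (take $\va=0$). So $u_j\geq u-2\ve$ almost everywhere, hence everywhere, since both sides are quasi-psh and a.e. inequalities between them propagate by taking local averages. Letting $\ve\to0$ gives $u_j\geq u$.

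\textbf{Step 2: capacity convergence and conclusion.} With $u_j\geq u$ and $u_j\to u$ in $L^1(X,dV)$, argue as in Corollary \ref{hq3.1} and Theorem \ref{dl3.2}. Quasi-continuity (Proposition \ref{md1.1}) gives an open $G$ with small capacity off which $u$ is continuous. Hartogs' lemma, applied to the $L^1$-convergent sequence, gives $u_j\leq u+\eta$ on $X\setminus G$ for large $j$. Hence $\{|u_j-u|>\eta\}\subset G$, so $u_j\to u$ in $Cap_{\om}$, and Theorem \ref{dl3.1} yields the weak convergence.

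The main obstacle is Step 1's absolute continuity of $\om^n_{u_j}$ with respect to $Cap_{\om}$, together with upgrading the a.e. inequality to an everywhere one. Both are routine but require care in the Hermitian setting.
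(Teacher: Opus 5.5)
Your proof is correct and follows essentially the paper's route. The paper just points back to the proof of Proposition \ref{md3.1}: show $u_j\geq u$ on $X$, deduce $u_j\to u$ in $Cap_{\om}$ by quasi-continuity and Hartogs' lemma as in Corollary \ref{hq3.1}, and invoke Theorem \ref{dl3.1}. The comparison-principle chain and the bound $\om^n_{u_j}\leq M^n\,\om^n_{u_j/M+1}$ in your Step 1 are superfluous, since taking $\va=0$ in the definition of $Cap_{\om}$ already gives $dV(\{u_j<u\})\leq Cap_{\om}(\{u_j<u\})=0$; your local-average argument then yields $u_j\geq u$ everywhere without using \eqref{3.5} at all.
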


\n The following result is a form of Theorem 3.6 in \cite{DH12} in Hermitian setting.
\begin{proposition}\label{md3.2} 
{\fontsize{11 pt}{1}\selectfont		Let $(u_j)\subset \mathcal{P}_0$ be an uniformly bounded sequence of $\om$-psh functions  and $u\in \mathcal{P}_0$. Assume that the following conditions hold:
	
	\n (i) $u_j\rightarrow u$ in $L^1(X,dV)$
	
	\n (ii) Let $\phi_j=\max\{u_j, u\}$ and assume that
	$$\lim\limits_{j\to\infty}\int\limits_{X}\|\om^n_{u_j}- \om^n_{\phi_j}\| = 0.$$
	
	\n Then $\om^n_{u_j}\rightarrow \om^n_{u}$ weakly.}	
\end{proposition}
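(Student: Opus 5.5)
The plan is to split
$$\om^n_{u_j}-\om^n_u=\bigl(\om^n_{u_j}-\om^n_{\phi_j}\bigr)+\bigl(\om^n_{\phi_j}-\om^n_u\bigr),$$
where $\phi_j=\max\{u_j,u\}$, and to show that each bracket tends to $0$ weakly. Both pieces are tested against an arbitrary $\psi\in C^{\infty}_0(X)$.

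\textbf{First bracket.} This is immediate from hypothesis (ii):
$$\Bigl|\int\limits_X\psi\,(\om^n_{u_j}-\om^n_{\phi_j})\Bigr|\leq \sup\limits_X|\psi|\,\|\om^n_{u_j}-\om^n_{\phi_j}\|(X)\rightarrow 0.$$
So the whole problem reduces to showing that $\om^n_{\phi_j}\to\om^n_u$ weakly. By Theorem \ref{dl3.1}, it suffices to show that $\phi_j\to u$ in $Cap_{\om}$. We first check that $(\phi_j)$ meets the hypotheses of Theorem \ref{dl3.1}.
\begin{itemize}
\item Each $\phi_j$ is $\om$-psh, being the maximum of two $\om$-psh functions.
\item The sequence $(\phi_j)$ is uniformly bounded, since $(u_j)$ is and $u$ is bounded.
\item We have $\sup_X\phi_j=0$: indeed $\phi_j\leq 0$, and $\phi_j\geq u_j$ with $\sup_X u_j=0$. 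Hence $\phi_j\in\mathcal{P}_0$.
\end{itemize}

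\textbf{Convergence in capacity of $(\phi_j)$.} We copy the argument of Corollary \ref{hq3.1} and Theorem \ref{dl3.2}.
\begin{itemize}
\item From (i) and $0\leq\phi_j-u\leq|u_j-u|$, we get $\phi_j\to u$ in $L^1(X,dV)$.
\item Given $\ve>0$, Proposition \ref{md1.1} gives an open $G$ with $Cap_{\om}(G)<\ve$ such that $u|_{X\setminus G}$ is continuous.
\item We apply the Hartogs lemma on the compact set $X\setminus G$ (as in Corollary \ref{hq3.1}), with $\limsup\phi_j\leq u$ there. This yields $j_0$ with $\phi_j\leq u+\eta$ on $X\setminus G$ for all $j\geq j_0$.
\item Combined with $\phi_j\geq u$ everywhere, this gives $\{|\phi_j-u|>\eta\}\subset G$. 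Hence $Cap_{\om}(\{|\phi_j-u|>\eta\})<\ve$ for $j\geq j_0$.
\end{itemize}
Theorem \ref{dl3.1} then gives $\om^n_{\phi_j}\to\om^n_u$ weakly. Adding the two brackets finishes the proof.

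\textbf{Main obstacle.} The delicate step is the Hartogs lemma. It requires every $L^1$-limit point of $(\phi_j)$ to have upper-semicontinuous regularization equal to $u$. This holds here because $\phi_j\to u$ in $L^1$ and the $\phi_j$ are $\om$-psh, hence uniformly quasi-psh. If only a subsequence version is available, I would pass to subsequences as in the proof of Theorem \ref{dl3.1}. Showing that every subsequence of $(\om^n_{u_j})$ has a further subsequence converging weakly to $\om^n_u$ suffices, since the limit $\om^n_u$ is independent of the subsequence.

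Unlike Theorem \ref{dl3.2}, I do not expect to need condition \eqref{3.5}. Hypothesis (ii) already controls the total variation directly, so no mass-balance argument is required.
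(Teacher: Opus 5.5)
Your proposal is correct and follows the paper's proof essentially step for step. It uses the same splitting through $\phi_j=\max\{u_j,u\}$, the bound $0\leq\phi_j-u\leq|u_j-u|$ for $L^1$ convergence, convergence in $Cap_{\omega}$ via quasi-continuity and the Hartogs lemma as in Corollary \ref{hq3.1} (using $\phi_j\geq u$), Theorem \ref{dl3.1} for the second bracket, and the total-variation estimate $\sup_X|\psi|\,\|\omega^n_{u_j}-\omega^n_{\phi_j}\|(X)$ for the first. Your remark that condition \eqref{3.5} is not needed also matches the paper, whose statement of this proposition does not assume it.
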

\begin{proof}
{\fontsize{11 pt}{1}\selectfont		It is easy to see that $\phi_j\in\mathcal{P}_0$ and $(\phi_j)$ is a  uniformly bounded sequence of $\om$-psh functions. From (i) we deduce \begin{equation}\label{3.10}
	\lim\limits_{j\to\infty}\int\limits_{X}|u_j- u|dV= 0.
	\end{equation}
	
	\n Since $|u_j-u| = (\phi_j- u_j) + (\phi_j - u)$ then it follows that $0\leq \phi_j- u\leq |u_j - u|$. Using \eqref{3.10}, we infer that 
	$$0\leq \int\limits_{X}(\phi_j - u)dV\leq\int\limits_{X}|u_j - u|dV\rightarrow 0,$$
	
	\n as $j\to\infty$. This yields that $\phi_j\rightarrow u$ in $L^1(X,dV)$. However, $\phi_j\geq u$ then by repeating the same arguments as in the proof of Corollary \ref{hq3.1} we get that $\phi_j$ is convergent to $u$ in $Cap_{\om}$. Theorem \ref{dl3.1} implies that $\om^n_{\phi_j}\rightarrow \om^n_{u}$ weakly in $X$. We obtain
	$$\om^n_{u_j}- \om^n_{u}= \om^n_{u_j} - \om^n_{\phi_j} + \om^n_{\phi_j} -  \om^n_{u}.$$
	
	\n Hence, it remains to prove that $ \om^n_{u_j}- \om^n_{\phi_j}\rightarrow 0$ weakly in $X$. Let $\psi\in C^{\infty}_0(X)$ be an arbitrary test function. Then we have
	\begin{equation}\label{3.12}
	\Bigl|\int\limits_{X}\psi(\om^n_{u_j}- \om^n_{\phi_j})\Bigl|\leq \int\limits_{X}|\psi|\|\om^n_{u_j}- \om^n_{\phi_j}\|\leq\sup\limits_{X}|\psi|\int\limits_{X}\|\om^n_{u_j}-\om^n_{\phi_j}\|.
	\end{equation}
	\n Because of the hypothesis (ii) the right-hand side of \eqref{3.12} tends to $0$ as $j\to\infty$ and the desired conclusion follows. }
\end{proof}

\n Next, we give a following result which is an extension of Corollary 1 in \cite{Xi08} in Hermitian setting. Namely, we have the following result.

\begin{proposition}\label{md3.3}
{\fontsize{11 pt}{1}\selectfont		Let $(u_j)\subset\mathcal{P}_0$ be a uniformly bounded sequence and $u\in \mathcal{P}_0$. Assume that $u_j\rightarrow u$ in $L^1(X,dV)$ and there exists a positive Radon measure $\mu$ on $X$ vanishing on pluripolar sets. If $\om^n_{u_j}\leq\mu$ for all $j\geq 1$ then $\om^n_{u_j}\rightarrow \om^n_u$ weakly in $X$. }
\end{proposition}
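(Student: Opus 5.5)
The plan is to reduce the statement to the mechanism already used in the proof of Theorem \ref{dl3.1}. That proof shows that a subsequence of $\om^n_{u_j}$ converges weakly to $\om^n_u$ as soon as
$$\lim_{j\to\infty}\int_X |u_j-u|\,\om^n_{u_j}=0,$$
by Lemma 2.4 in \cite{KN25}. Weak convergence of the whole sequence then follows by the usual subsequence-of-a-subsequence argument. Since $\om^n_{u_j}\leq\mu$, it suffices to prove
$$\lim_{j\to\infty}\int_X|u_j-u|\,d\mu=0,$$
at least along a subsequence of an arbitrary subsequence. I split $|u_j-u|=(u_j-u)^++(u-u_j)^+$ and treat the two parts separately.

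\emph{Upper part.} Passing to a subsequence, I may assume $u_j\to u$ a.e. Put $v_j=(\sup_{k\geq j}u_k)^*$. These are $\om$-psh, uniformly bounded and decreasing, and their limit $\lim_j v_j$ is an $\om$-psh function equal to $u$ a.e., hence equal to $u$ everywhere. Moreover $v_j=\sup_{k\geq j}u_k$ outside a pluripolar set, by the Bedford--Taylor negligible set theorem, which is local and so transfers to $X$. Since $\mu$ does not charge pluripolar sets, this holds $\mu$-a.e., so $0\leq (u_j-u)^+\leq v_j-u$ $\mu$-a.e. Monotone convergence, with $\mu(X)<\infty$ and uniform boundedness, gives $\int_X (v_j-u)\,d\mu\to0$. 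Hence $\int_X (u_j-u)^+\,d\mu\to 0$.

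\emph{Lower part, the main obstacle.} I must show $\int_X (u-u_j)^+\,d\mu\to0$. Hartogs-type arguments only bound $u_j$ from above, so they cannot control this term directly. I would argue in three steps.

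First, I would show that $\mu$ is uniformly absolutely continuous with respect to $Cap_\om$: for every $\ve>0$ there is $\delta>0$ such that $Cap_\om(E)<\delta$ implies $\mu(E)<\ve$. If this failed, there would be open sets $E_k$ with $Cap_\om(E_k)<2^{-k}$ and $\mu(E_k)\geq\ve$. The set $\limsup_k E_k$ has zero capacity, hence is pluripolar (using the Hermitian analogue of ``$Cap_\om=0\iff$ pluripolar''). Yet it has $\mu$-measure at least $\ve$, contradicting the hypothesis on $\mu$.

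Second, I would establish the inequality
$$\int_X (u-u_j)^+\,d\mu\leq \int_X(\phi_j-u_j)\,\om^n_{u_j}+\text{(error)},\qquad \phi_j=\max\{u_j,u\}.$$
Here $\phi_j\geq u$ and $\phi_j\to u$ in $L^1$, so, as in the proof of Corollary \ref{hq3.1}, $\phi_j\to u$ in $Cap_\om$. I would then compare $\int_X(\phi_j-u_j)\,\om^n_{u_j}$ with $\int_X(\phi_j-u_j)\,\om^n_{\phi_j}$ by the integration-by-parts estimates of \cite{KN25} (their Proposition 2.5 and Lemma 2.4, including the Hermitian torsion error terms), and use convergence in capacity of $\phi_j$.

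Third, I would use the domination $\om^n_{u_j}\leq\mu$ together with the first step. This makes the contribution of the small-capacity sets $\{u-u_j>\eta\}\cap G$, with $G$ the exceptional open set from Proposition \ref{md1.1}, uniformly small.

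If this comparison route fails, I would fall back to a different argument. By weak compactness, $u_j\,\mu$ has a weak limit along a subsequence. Then I would identify $\lim\int u_j\,d\mu$ by testing against continuous approximants of $u$ on $X\setminus G$, again exploiting that $\mu(G)$ is small by the first step.

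Once $\int_X|u_j-u|\,\om^n_{u_j}\to0$ is obtained, I conclude exactly as in the proof of Theorem \ref{dl3.1}.
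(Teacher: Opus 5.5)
Your reduction and endgame are sound. Since $\om^n_{u_j}\le\mu$, it suffices to get $\int_X|u_j-u|\,d\mu\to0$ along a further subsequence of an arbitrary subsequence, and then Lemma 2.4 of \cite{KN25}, applied to $\int_X|u_j-u|\,\om^n_{u_j}\to0$, finishes. This is in fact slightly better than the paper, which goes through Theorem \ref{dl3.2} and so uses \eqref{3.5}, a condition not among the hypotheses of the proposition. Your upper half, via $v_j=(\sup_{k\ge j}u_k)^*\searrow u$ and the negligible-set theorem, is correct, and so is Step 1. But the lower half, $\int_X(u-u_j)^+\,d\mu\to0$, is not proved. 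It is exactly the nontrivial input; the paper obtains the whole of $\int_X|u_{j_k}-u|\,d\mu\to0$ by citing Corollary 2.2 of \cite{KN25}.

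None of your devices supplies it.

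\emph{Step 2.} Since $(u-u_j)^+=\phi_j-u_j$ and $\om^n_{u_j}\le\mu$, you always have $\int_X(\phi_j-u_j)\,\om^n_{u_j}\le\int_X(u-u_j)^+\,d\mu$. The reverse bound would need $\int_X(\phi_j-u_j)\,d(\mu-\om^n_{u_j})$ to be small, and nothing makes the domination nearly sharp. Moreover, $\int_X(\phi_j-u_j)\,\om^n_{u_j}$ is the lower half of the quantity you ultimately want, so the step is circular. Integration by parts also compares it with $\int_X(\phi_j-u_j)\,\om^n_{\phi_j}$ in the wrong direction: in the K\"ahler case it gives $\int_X(\phi_j-u_j)\,\om^n_{u_j}\ge\int_X(\phi_j-u_j)\,\om^n_{\phi_j}$. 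Even the latter is not controlled by $\phi_j\to u$ in capacity, because the integrand is $\phi_j-u_j$, not $\phi_j-u$.

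\emph{Step 3.} This handles only $G$. On $X\setminus G$, continuity of $u$ and Hartogs' lemma bound $u_j$ from above only. Since $u_j$ does not converge to $u$ in capacity, Step 1 says nothing about $\mu(\{u_j<u-\eta\})$. Smallness of $dV(\{u_j<u-\eta\})$ is useless when $\mu$ is singular to $dV$, e.g.\ surface measure on a smooth real hypersurface, which does not charge pluripolar sets.

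\emph{Fallback.} This yields only $\nu\le u\,\mu$ for a weak limit $\nu$ of $u_j\,\mu$ (reverse Fatou plus Hartogs). The inequality $\nu\ge u\,\mu$ is again the missing lower bound.

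What you need is the Cegrell-type lemma: for $\mu$ not charging pluripolar sets, $L^1(dV)$-convergence of uniformly bounded $\om$-psh functions implies $L^1(\mu)$-convergence along a subsequence. Either cite Corollary 2.2 of \cite{KN25}, as the paper does, or prove it. The statement is local, so torsion plays no role. One route:
write $\mu=f\,(dd^c\psi)^n$ locally with $\psi$ bounded psh (Cegrell's decomposition);
reduce to bounded $f$;
show $\int\chi u_j\,(dd^c\psi)^n\to\int\chi u\,(dd^c\psi)^n$ by integrating by parts onto $u_j$, using quasi-continuity of $\psi$ and uniform capacity bounds for mixed Monge--Amp\`ere measures of uniformly bounded functions.
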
	
\begin{proof}
{\fontsize{11 pt}{1}\selectfont		It is enough to show that every subsequence of the $\om^n_{u_j}$ has its subsequence which is weakly convergent to $\om^n_u$. Therefore, without loss of generality, we consider $(u_j)$ as a subsequence of the original sequence. Because $u_j\rightarrow u$ in $L^1(X,dV)$ then there exists a subsequence $(u_{j_k})\subset (u_j)$ such that $u_{j_k}$ converges $dV$-a.e. to $u$. Then by Corollary 2.2 in \cite{KN25} we can choose a subsequence of the $(u_{j_k})$ which we still denote it by $(u_{j_k})$ satisfying 
	$$\lim\limits_{k\to\infty}\int\limits_{X}|u_{j_k} - u|d\mu=0.$$
	
	\n For an arbitrary $\delta>0$ consider
	\begin{align*}
	\int\limits_{\{u\geq u_{j_k}+\delta\}}\om^n_{u_{j_k}}&\leq\frac{1}{\delta}\int\limits_{\{u\geq u_{j_k}+\delta\}}|u_{j_k}-u|\om^n_{u_{j_k}}\\
	&\leq \frac{1}{\delta}\int\limits_{X}|u_{j_k}-u|\om^n_{u_{j_k}}\leq \frac{1}{\delta} \int\limits_{X}|u_{j_k} - u|d\mu\rightarrow 0,
	\end{align*}

	\n as $k\to \infty$. By using Theorem \ref{dl3.2} we infer that $\om^n_{u_{j_k}}\rightarrow\om^n_u$ weakly in $X$. The  proof is complete.}
\end{proof}

}
\section{Applications}\label{sec4}
{\fontsize{11 pt}{1}\selectfont	
\n In this section by relying on the main result in \cite{KN25} we give a result about solving Monge-Amp\`ere equation on compact Hermitian manifolds if there exists a smooth subsolution. Note that, for the case of a compact Hermitian manifold with boundary and with given boundary conditions, if we have the assumption about the existence of a subsolution, then the existence of a solution to the Dirichlet problem for the complex Monge-Amp\`ere equation has been recently established in \cite{KN23}. Now we state and prove the following result.
\begin{theorem}\label{dl4.1}{\fontsize{11 pt}{1}\selectfont	 Let $(X, \omega)$ be a compact Hermitian manifold of complex dimension $n$, with smooth volume form $dV = \omega^n$ as in Section \ref{sec3}. Suppose that $\mu$ is a positive Borel measure with $\mu(X) = \int\limits_X \om^n = \int\limits_X dV$ and there exists a function $v \in PSH(\omega) \cap C^{\infty}(X)$ such that $\mu \leq A \omega_v^n$ for some constant $A > 0$. Then, there exists a function $u \in PSH(\omega) \cap L^{\infty}(X)$ and a constant $c> 0$ such that $\omega^n_u = c\mu$.}
\end{theorem}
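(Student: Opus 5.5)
Since $v$ is smooth on the compact manifold $X$, the measure $\om^n_v$ has a bounded smooth density, so $\om^n_v\leq B\,dV$ for some $B>0$. Hence $\mu\leq AB\,dV$: the measure $\mu$ is absolutely continuous with respect to $dV$, with density $f=d\mu/dV\in L^\infty(X)\subset L^p(X,dV)$ for every $p>1$. The plan is to invoke the existence theorem of Ko{\l}odziej and Nguyen for Hermitian Monge--Amp\`ere equations with $L^p$ densities, or more precisely the main result of \cite{KN25} on measures dominated by Monge--Amp\`ere measures of bounded (here smooth) subsolutions. That result produces $u\in PSH(\om)\cap L^\infty(X)$ and a constant $c>0$ with $\om^n_u=c\mu$, provided $\mu(X)>0$. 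This is clear here, since $\mu(X)=\int_X dV>0$.

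If one prefers a self-contained route through the results of Section \ref{sec3}, I would argue as follows. Approximate $\mu$ by smooth positive measures $\mu_k=f_k\,dV$ with $f_k>0$ smooth, $f_k\to f$ in $L^p$, $\sup_k\|f_k\|_{L^\infty}<\infty$ and $\mu_k(X)=\mu(X)$. Tosatti--Weinkove's solution of the Hermitian Calabi--Yau equation gives smooth $u_k$ with $\sup_X u_k=0$ and $\om^n_{u_k}=c_k f_k\,dV$. The proof then needs three estimates.

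First, the constants $c_k$ are bounded above and below away from $0$. This follows from the standard bounds for $c$ in \cite{KN15}, which depend only on a lower bound for the $L^1$ norm and an upper bound for the $L^p$ norm of the density.

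Second, the $u_k$ are uniformly bounded. This is Ko{\l}odziej--Nguyen's $L^\infty$ estimate for $L^p$ densities.

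Third, by compactness of $\{u\in PSH(\om):\sup_X u=0\}$ in $L^1$, a subsequence satisfies $u_k\to u$ in $L^1(X,dV)$ and $c_k\to c>0$, and we may also assume $\sup_X u=0$.

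Since $\om^n_{u_k}\leq C\,dV$ uniformly, the argument of Corollary \ref{hq3.3} (or Corollary \ref{hq3.2}) then gives $\om^n_{u_k}\to\om^n_u$ weakly. Meanwhile $c_kf_k\,dV\to cf\,dV=c\mu$, so $\om^n_u=c\mu$.

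The main obstacle is that Corollaries \ref{hq3.2} and \ref{hq3.3} are stated under the mass condition \eqref{3.5}, which fails on a general Hermitian manifold. I would bypass this by passing to convergence in capacity directly. The uniform $L^p$ bound on the densities, together with the uniform $L^\infty$ bound, gives $\int_X|u_k-u|\,\om^n_{u_k}\to0$ by H\"older's inequality and $L^1$ convergence. Lemma 2.4 in \cite{KN25}, used as in the proof of Theorem \ref{dl3.1}, then yields weak convergence of a subsequence $\om^n_{u_{k_l}}\to\om^n_u$, and a subsequence suffices to identify the limit equation. The hypothesis involving $v$ is needed only to ensure the $L^\infty$ density (more generally, domination by a bounded subsolution, which is exactly what \cite{KN25} handles).
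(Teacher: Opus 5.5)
Your proposal is correct, and its main argument takes a genuinely different and much shorter route than the paper's.

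The decisive observation is that smoothness of $v$ on compact $X$ gives $\om^n_v\le B\,dV$, so $\mu=f\,dV$ with $0\le f\le AB$. The statement is then a direct special case of the existence theorem of \cite{KN15} for $L^p$ densities with positive integral. That theorem even yields a continuous $u$, and the normalization $\mu(X)=\int_X dV$ is used only through $\mu(X)>0$.

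The paper instead reconstructs the solution as a limit:
\begin{enumerate}
\item it writes $\mu=f\om^n_v$ and solves $\om^n_{u_j}=B_jf_j\om^n_v$ for smooth approximants $f_j$ via \cite{TW10};
\item it gets uniform bounds on $B_j$ and $u_j$ from Lemma 5.9 and Corollary 5.6 of \cite{KN15}, and passes to an $L^1$ limit;
\item it identifies the limit measure by the convergence theorem of \cite{KN25} applied to $\om^n_{u_j}\le 2AB\,\om^n_v$;
\item it obtains $c>0$ from Remark 5.7 of \cite{KN15}.
\end{enumerate}

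Your fallback route is essentially this argument, with two small differences in your favour. First, you approximate by strictly positive smooth densities against $dV$, which is what \cite{TW10} actually requires. The paper's $f_j\om^n_v$ can vanish where $\om+dd^cv$ degenerates; this is easily repaired, for instance by replacing $v$ with $(1-\varepsilon)v$, but the paper passes over it. Second, you correctly note that Corollaries \ref{hq3.2}--\ref{hq3.3} are unavailable without \eqref{3.5}. You replace them by the H\"older bound on $\int_X|u_k-u|\,\om^n_{u_k}$ together with Lemma 2.4 of \cite{KN25}, whereas the paper uses the domination theorem of \cite{KN25}; both give weak convergence along a subsequence, which suffices.

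The paper's route buys consistency with its theme of obtaining Monge--Amp\`ere measures as weak limits. It gains no generality here, however: its uniform $L^\infty$ bound on $u_j$ from \cite{KN15} works because $\om^n_v$ has bounded density with respect to $dV$. That is the same fact behind your one-line reduction.
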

\begin{proof}
{\fontsize{11 pt}{1}\selectfont	
	Using Lebesgue-Radon-Nikodym theorem we can write $\mu= f\om^n_v$, $f\in L^1(X,\om^n_v), 0\leq f\leq A$. Next, we choose a sequence $(f_j)\in L^1(X,\om^n_v)\cap C^{\infty}(X)$ such that $0<f_j\leq 2A$ for all $j\geq 1$ and $\int\limits_{X}|f_j - f|\om^n_{v}\rightarrow 0$ as $j\to\infty$.
	Since the measure $f_j\omega^n_{v}$ is smooth, by a well-known result of \cite{TW10}, there exists a sequence $(u_{j})\in PSH(\om)\cap C^{\infty}(X), \sup\limits_{X} u_{j}=0$ and $B_{j}>0$ such that 
	\begin{equation}\label{4.1}
	\om^n_{u_{j}}= B_{j} f_j\om^n_{v}.
	\end{equation}
	According to Lemma 5.9 in \cite{KN15}, we infer that $0<B_{j}\leq B$ with a uniform $B>0.$
	Therefore, it follows from Corollary 5.6 in \cite{KN15} that we have $-C\leq u_j\leq 0$ with uniform $C>0.$ By passing to a subsequence we may assume that $u_j\to u$ in $L^1(X, dV)$, $u\in PSH(\om)$, $-C\leq u\leq 0$ and $B_j\to c\geq 0.$  We have\begin{equation}\label{4.2}
	\om^n_{u_{j}}= B_{j} f_j\om^n_{v}\leq 2AB\om^n_{v}.
	\end{equation}
	From \eqref{4.2} and utilizing the main result in \cite{KN25}, there is a subsequence of $\{u_j\},$ for simplicity still denoted by $\{u_j\}$ such that $\omega^n_{u_j}\to \omega^n_u$ as  $j\to\infty.$ By Equation \eqref{4.1} we infer that $$\omega^n_u=cf\omega^n_v=c\mu.$$ Since $u$ is bounded, by Remark 5.7 in \cite{KN15} it follows that $c>0$.
	The proof of theorem is complete. }
\end{proof}
}
\vskip 0.3 cm

\begin{thebibliography}{9999}
	{\fontsize{10 pt}{1}\selectfont	
\bibitem{BeTa82} E. Bedford and B. A. Taylor. A new capacity for plurisubharmonic functions. Acta Math., \textbf{149}:1--40, 1982. \url{https://doi.org/10.1007/BF02392348}
	
\bibitem{BL99} T. Bloom and N. Levenberg. Capacity convergence results and applications to a Berstein-
Markov inequality. Trans. Amer. Math. Soc., \textbf{351}(12):4753--4767, 1999. \url{https://doi.org/10.1090/S0002-9947-99-02556-8}

\bibitem{Ce83} U. Cegrell. Discontinuit{\'e} de l'op{\'e}rateur de {M}onge-{A}mp{\`e}re complexe. C.R Acad. Sci. Paris S{\'e}r. I Math., \textbf{296}(21):869--871, 1983.

\bibitem{Ce98} U. Cegrell. Pluricomplex energy. Acta Math., \textbf{180}(2):187--217, 1998. \url{https://doi.org/10.1007/BF02392899}


\bibitem{Ce01} U Cegrell. Convergence in capacity. Isaac Newton Institute for Math. Science Preprint Series NI01046-NPD, 2001, also available at arxiv.org: math. CV/0505218, 2001.

\bibitem{Ce04} U. Cegrell. The general definition of the complex {M}onge-{A}mp{\`e}re operator. Ann. Inst. Fourier
(Grenoble), \textbf{54}(1):159--179, 2004. \url{https://doi.org/10.5802/aif.2014}


\bibitem{Ce12} U. Cegrell. Convergence in capacity. Can. Math. Bull., \textbf{55}(2):242--248, 2012. \url{https://doi.org/10.4153/CMB-2011-078-6}


\bibitem{DH12}  S. Dinew and Pham Hoang Hiep. Convergence in capacity on compact {K}{\"a}hler manifolds. Ann. Sc. Norm. Super. Pisa-Cl. Sci., \textbf{11}(4):903--919, 2012. \url{https://www.numdam.org/item/ASNSP_2012_5_11_4_903_0/}


\bibitem{DK12} S. Dinew and S. Ko{\l}odziej. Pluripotential estimates on compact Hermitian manifolds. Series: Advanced
Lectures in Mathematics, Book title: Advances in geometric analysis : Workshop in honour of
Shing-Tung Yau's 60th birthday, Warsaw, Poland, April 6-8,(2009). Number 21, p.69--86. Somerville: International Press, 2012. \url{http://ruj.uj.edu.pl/xmlui/handle/item/599}


\bibitem{GZ05} V. Guedj and A. Zeriahi. Intrinsic capacities on compact {K}{\"a}hler manifolds. J. Geom. Anal., \textbf{15}:607--639, 2005. \url{https://doi.org/10.1007/BF02922247}

\bibitem{GZ07} V. Guedj and A. Zeriahi. The weighted {M}onge-{A}mp{\`e}re energy of quasiplurisubharmonic
functions. J. Funct. Anal., \textbf{250}(2):442--482, 2007. \url{https://doi.org/10.1016/j.jfa.2007.04.018}


\bibitem{Hi08a} Pham Hoang Hiep. On the convergence in capacity on compact {K}{\"a}hler manifolds and its
applications. Proc. Amer. Math. Soc., \textbf{136}(6):2007--2018, 2008. \url{https://doi.org/10.1090/S0002-9939-08-09043-6}


\bibitem{Hi08} Pham Hoang Hiep. Convergence in capacity. Ann. Pol. Math., \textbf{93}:91--99, 2008. \url{http://pldml.icm.edu.pl/pldml/element/bwmeta1.element.bwnjournal-article-doi-10_4064-ap93-1-8}

\bibitem{Hi10} Pham Hoang Hiep. Convergence in capacity and applications. Math. Scand., \textbf{107}(1):90--102, 2010. \url{https://www.jstor.org/stable/24493697}

\bibitem{KN15} S. Ko{\l}odziej and N.-C. Nguyen,  Weak solutions to the complex {M}onge-{A}mp{\`e}re equation on Hermitian manifolds. Analysis, complex geometry, and mathematical physics: in honor of Duong H. Phong, Contemporary Mathematics, vol. {\bf 644} (American Mathematical Society, Providence, RI, 2015), 141--158.  \url{http://dx.doi.org/10.1090/conm/644/12775}


\bibitem{KN23} S. Ko{\l}odziej and N.-C. Nguyen, The {D}irichlet problem for the {M}onge-{A}mp\`ere equation on {H}ermitian manifolds with boundary,  Calc. Var. Partial Differ. Equ., \textbf{62}(1), 2023. \url{https://doi.org/10.1007/s00526-022-02336-y}


\bibitem{KN25} S. Ko{\l}odziej and N.-C. Nguyen,  Weak convergence of {M}onge-{A}mp{\`e}re measures on compact Hermitian
manifolds. In: Sugawa, T., et al. Complex Geometric Analysis. CCGA. Springer Proceedings in
Mathematics Statistics, vol 481, pages 113--123. Springer, Singapore., 2022. \url{https://doi.org/10.1007/978-981-96-0447-0_10}



\bibitem{TW10} V. Tosatti and B. Weinkove. The complex {M}onge-{A}mp{\`e}re equation on compact {H}ermitian manifolds. J. Amer. Math. Soc., \textbf{23}(4):1187--1195, 2010. \url{https://doi.org/10.1090/S0894-0347-2010-00673-X}


\bibitem{Xi96} Y. Xing. Continuity of the complex {M}onge-{A}mp\`ere operator. Proc. Amer. Math. Soc., \textbf{124}(2):457--467,
1996. \url{https://doi.org/10.1090/S0002-9939-96-03316-3}


\bibitem{Xi08} Y. Xing. Convergence in capacity. Ann. Inst. Fourier (Grenoble), \textbf{58}(5):1839--1861, 2008. \url{https://doi.org/10.5802/aif.2400}


\bibitem{Xi09} Y. Xing. Continuity of the complex {M}onge-{A}mp{\`e}re operator on compact {K}{\"a}hler manifolds. Math.
Z., \textbf{263}(2):331--344, 2009. \url{https://doi.org/10.1007/s00209-008-0420-8} 







}
\end{thebibliography}
\noindent {\bf Data Availibility Statement}\\
The manuscript has no associated data.
\vskip 0.3 cm

\noindent {\bf Ethics declarations}\\
\noindent {\bf Conflict of interest}\\
The authors declare that no conflict of interest in this paper is reported.
\vskip 0.3 cm

\noindent \textbf{ORCID}\\
\noindent \textsc{Le Mau Hai}  \url{https://orcid.org/0000-0002-2264-8305}\\
\noindent \textsc{Nguyen Van Phu}
\url{https://orcid.org/0000-0002-2851-7250} \\
\noindent \textsc{Trinh Tung}  \url{https://orcid.org/0000-0002-0681-7447}

\end{document}